\setlist[enumerate]{itemsep=3pt,topsep=3pt}
\setlist[enumerate,1]{label=\textup{(\roman*)}}
\setlist[enumerate,2]{label=\textup{(\alph*)}}
\newtheorem{lemma}{Lemma}[subsection]
\newtheorem{theorem}[lemma]{Theorem}
\newtheorem{corollary}[lemma]{Corollary}
\setlist[enumerate]{topsep=0pt}
\newcommand{\C}{\mathbb{C}}
\newcommand{\K}{\mathrm{K}}
\renewcommand{\H}{\mathbb{H}}
\renewcommand{\O}{\mathbb{O}}
\renewcommand{\P}{\mathbb{P}}
\newcommand{\Q}{\mathbb{Q}}
\newcommand{\R}{\mathbb{R}}
\newcommand{\Z}{\mathbb{Z}}
\newcommand{\CP}{\C\P}
\newcommand{\eps}{\epsilon}
\renewcommand{\o}{\otimes}
\newcommand{\SU}{\mathrm{SU}}
\newcommand{\coker}{\mathrm{coker}}
\newcommand{\Hom}{\mathrm{Hom}}
\newcommand{\Tor}{\mathrm{Tor}}
\newcommand{\U}{\mathrm{U}}
\newcommand{\Spin}{\mathrm{Spin}}
\newcommand{\HSpin}{\mathrm{HSpin}}
\newcommand{\Sp}{\mathrm{Sp}}
\newcommand{\SO}{\mathrm{SO}}
\newcommand{\ch}{\mathrm{ch}}
\newcommand{\ev}{\mathrm{ev}}
\newcommand{\Magma}{\textsc{Magma}}
\title{The classical topological invariants of homogeneous spaces}
\author{John Jones, Dmitriy Rumynin, Adam R. Thomas}
\begin{document}

\begin{abstract} 
We study the homogeneous spaces of a simply connected, compact, simple Lie group $G$ through the lens of $\K$-theory. Our general methods apply equally well to the case where $G$ is in one of the four infinite families of classical groups, or one of the five exceptional groups. In this paper we focus on the case of homogenous spaces $G/K$ where $G$ and $K$ have the same rank. The main examples we study in detail are the three symmetric spaces EIII, EVI, EVIII in Cartan's list of symmetric spaces. These are, respectively, homogeneous spaces for $E_6$, $E_7$, $E_8$ with dimensions $32$, $64$, $128$ and known as Rosenfeld projective planes.
\end{abstract}

\maketitle

\section*{Introduction} \label{sec:intro}

Let $G$ be a compact simply connected simple Lie group.  A (global) symmetric space for $G$ is a homogeneous space of the form $G/K$ where $K$ is the fixed point subgroup of an involution of $G$.  Cartan classified the symmetric spaces into seven infinite families, the symmetric spaces for the classical groups, together with twelve symmetric spaces for the exceptional groups.  A more conceptual, modern, classification is given in \cite{MR2785763} in terms of Freudenthal magic squares.  

The classical  symmetric spaces are very well understood.  For example, the papers of Borel and Hirzebruch \cite{MR0102800,MR0110105,MR0120664}, Bott and Samelson \cite{MR87035,MR0060233} give a wealth of useful information concerning the cohomology of the classical symmetric spaces, their homotopy groups, and the characteristic classes of the natural bundles over them.  In comparison the exceptional symmetric spaces are not at all well understood.  The paper by Piccinni \cite{MR3751970}, in particular Table A, gives an account of what is known and the relevant references.

Here is an example which shows quite how complicated this can be.  The simply connected compact Lie group $E_8$ has a $128$-dimensional symmetric space but there does not seem to be an explicit presentation of its rational cohomology algebra in the literature. To adapt a slogan from \cite{MR3544263}, we understand compact simply connected simple Lie groups only as far as we understand $E_8$.
 
Before going into more detail, we fix our conventions used throughout the paper. 
\begin{enumerate}
\item A Lie group is automatically compact and connected. All subgroups of Lie groups are assumed to be Lie groups in their own right.
\item Representations will always be finite-dimensional complex representations, unless explicitly stated otherwise, and $\K$-theory means complex $\K$-theory.  
\item Let $G$ be a Lie group. We make a choice of a maximal torus $T$ in $G$ and a base for the corresponding root system. Then we use the term {\em fundamental representation} of $G$ for an irreducible representation of $G$ whose highest weight is a fundamental weight for $T$. If $G$ is simply connected then there are $n$ fundamental representations $\rho_1, \dots, \rho_n$ of $G$ (which are all irreducible), where $n$ is the rank of $G$, and the representation ring of $G$ is the polynomial ring
$$
R(G) = \Z[\rho_1, \dots, \rho_n].
$$
For further details on the representation theory of Lie groups see, for example, \cite[Chapters~4, 6 and 7]{MR252560}, \cite[Chapters~13 and 14]{MR1249482}, \cite{MR1428422}, \cite{MR1153249}. 
\end{enumerate}

If $K \subset G$ is a subgroup of $G$ then the restriction homomorphism is a ring homomorphism $R(G) \to R(K)$ of representation rings.  Our primary strategy is to extract topological information about $G/K$ from this ring homomorphism.  We use $\K$-theory to study topological invariants and to express these invariants in terms of representation theory. In doing so, we take advantage of the usual natural operations, $\lambda$-operations $\lambda^k$ and Adams operations $\psi^k$, acting on these rings. Then we use the powerful methods of representation theory, both conceptual and computational, to get our results.  This leads to systematic methods that apply equally well to any simply connected Lie group. Our main objective is to apply these methods to understand more about the exceptional symmetric spaces. 

There is a significant computational aspect to this approach, with its emphasis on the exceptional symmetric spaces, since some big numbers occur in the representation theory of the exceptional Lie groups.  For example, the representation ring of $E_8$ is generated by the eight fundamental representations, with dimension
$$
3875, 147250, 669600, 6899079264, 146325270, 2450240, 30380, 248.
$$

The examples we study in detail in this paper are the three Rosenfeld projective planes, EIII, EVI, EVIII in Cartan's list, with dimensions 32, 64, and 128. In particular, our methods are tailored to the case where $G$ and $K$ have equal rank. We plan to study the other exceptional symmetric spaces, including those where $K$ has rank less than $G$, in detail in a follow up to this paper.

The following three results give us a good starting point for understanding topological invariants of a Lie group $G$, its homogeneous spaces $G/K$, and its classifying space $BG$, in terms of $\K$-theory and representation theory. 
\begin{enumerate}
\item
Let $G$ be a Lie group with $\pi_1(G)$ a free abelian group.  Then a theorem due to Luke Hodgkin \cite[p.~85]{MR0478156} gives a simple and conceptual description of the $\K$-theory of $G$ as a functor of the representation ring $R(G)$: precisely 
$$
\K^*(G) = \Tor^*_{R(G)}(\Z, \Z).
$$ 
\item 
Suppose that $K$ is a subgroup of $G$ with the same rank as $G$.  Then again we get a simple and conceptual description of the $K$-theory of $G/K$ using a theorem of Harsh Pittie \cite{MR0290402}: precisely 
$$
\K^*(G/K) = R(K)\o_{R(G)}\Z.
$$ 
\item
If $G$ is any Lie group, the well-known  Atiyah--Segal completion theorem \cite{MR0259946} gives the $\K$-theory of $BG$ as a functor of the representation ring of $G$: 
$$
\K^*(BG) = \hat{R}(G)
$$ 
where $\hat{R}(G)$ is the completion of $R(G)$ at the augmentation ideal.
\end{enumerate}

\subsection*{Contents}
We now give a summary of this paper and try to make the conceptual relations between the various, and quite diverse, sections clear.

\subsection*{Section \ref{sec:Ktheorybasics} }
Here we give precise formulations of the theorems of Hodgkin and Pittie and outline the proofs.  

\subsection*{Section \ref{sec:Adamsops}} 
Next, let $G$ be a Lie group with free abelian fundamental group.  We turn to the study of the Adams operations in $\K^*(G)$.  Here is an example which shows why this is important.  Representations of $K$ give vector bundles over $G/K$ and it is natural to use characteristic classes of these bundles to study the cohomology of $G/K$. For example, given a compact Hausdorff space $X$, the Chern character gives an isomorphism of $\Z/2$-graded rings
$$
\ch : \K^*(X)\o\Q \to  H^*(X; \Q).
$$
Here $H^*$ means the direct product of the $H^{k}$.  However, this does not determine the individual groups $H^k(X; \Q)$.  To do this we use Adams operations.  

According to Hodgkin's theorem the $\Z/2$-graded ring $\K^*(G)$ is generated by elements in $\K^1(G)$ and we must explain what we mean by Adams operations in the $\Z/2$-graded ring $\K^*(G)$.  We use the (standard) terminology $\Psi$-ring for a ring $R$ equipped with a sequence of ring endomorphisms $\psi^k$, $k \geq 1$ such that $\psi^1 = 1$ and $\psi^k\psi^l = \psi^{kl}$.  

The representation ring of a Lie group is an example of a $\Psi$-ring.  Following Bousfield \cite{MR1373816} we describe the notion of a $\Z/2$-graded $\Psi$-ring.  We then show how the $\Psi$-ring structure of $R(G)$ determines the $\Z/2$-graded $\Psi$-ring structure of $\K^*(G)$. 

\subsection*{Section \ref{sec:type}} 
In \cite{MR0059548} Serre proves that a Lie group of rank $n$ has the rational homotopy type of a product of $n$ odd dimensional spheres.  The type of $G$ is this list of odd integers.  The main theme in this section is to use a functorial version of the type of $G$.  

Let $V(G)$ be the indecomposable quotient of the ring $R(G)$. In other words,
$$
V(G) = R(G)/(\Z \oplus I^2(G)), \quad I^2(G) = I(G)^2
$$
where $I(G)$ is the kernel of the augmentation map
$$
R(G) \rightarrow \Z, \quad [\rho] \mapsto \dim (\rho).
$$
It is a free abelian group with rank equal to the rank of $G$. It also has Adams operations. More precisely, it is a $\Psi$-module in the usual sense. We show how to calculate the type of $G$ from the $\Psi$-module $V(G)$.

Next let $\pi(G) \o \Q$ be the direct sum of the nonzero rational homotopy groups of $G$.  We show that there is a natural isomorphism
$$
\pi(G)\otimes \Q \to V^*(G)\o\Q.
$$
where $V^*(G)$ is the dual of $V(G)$.  Using Adams operations we can refine this to give a natural description of the individual rational homotopy groups.   

The importance of this result is that it is functorial.  It gives a way of using representation theory to compute the homomorphism of rational homotopy groups defined by a homomorphism of Lie groups $h : H \to G$.  This turns out to be a very efficient way of doing such calculations. 

\subsection*{Section \ref{sec:Psimodule}} 

The previous section shows that the $\Psi$-module $V(G)$ is a very useful invariant of $G$ and in this section we study it in much more detail. 

Let $G$ be a simply connected simple Lie group, not isomorphic to one of the groups $\Spin(2n)$. In Section \ref{subsec:applications} we show that $V(G)\o\Q$ is generated as \ $\Psi$-module by one element. In Theorem \ref{thm:simplesinglegenpsimod} there is a list of simple Lie groups $G$ together with representations which define generators of the $\Psi$-module $V(G) \otimes \Q$. These results have two corollaries. The first is Corollary \ref{cor:Z2psi}, which shows that $\K^*(G)\o\Q$ is generated as a $\Z/2$-graded $\Psi$-ring by one element. The second is Corollary~\ref{cor:V(H)}, which states the following. 

Let $H$ be another Lie group and $f, g : H \to G$ be two  homomorphisms. Let $u \in R(G)$ be such that the element of $V(G)\o\Q$ defined by $u$ generates $V(G) \otimes \Q$ as a $\Psi$-module. Then
\[ f_*,g_* : \pi_*(H)\o \Q \to \pi_*(G)\o\Q \] 
are equal if and only if 
\[ f^*(u) = g^*(u) \in V(H).\]

In Section \ref{sec:spin2l} we study the $\Psi$-module $V(\Spin(2n)) \otimes \Q$ and adapt the two corollaries to give the corresponding results for the groups $\Spin(2n)$. 

\subsection*{Section \ref{sec:ratcoho}} 

Let $U$ be the vector bundle over $BG$ defined by a representation generating the $\Psi$-module $V(G)\o\Q$. In this section we apply the results of Section \ref{sec:Psimodule} to show that the rational cohomology of $BG$ is generated by the classes $\ch_q(U)$ with $q\geq 1$.  Here $\ch_q(U) \in H^{2q}(BG; \Q)$ is the degree $2q$ component of the Chern character of $U$.  This is obvious for the classical groups. But it tells us something very useful in the case of the exceptional groups.  More generally, let $\rho$ be a representation of $G$ and let $E_{\rho}$ be the vector bundle over $BG$ defined by $\rho$. In Theorem \ref{thm:chernofminisenough} we give a necessary and sufficient condition on $\rho$ which ensures that the elements $\ch_q(E_{\rho})$ with $q \geq 1$ generate the rational cohomology of $BG$.  

Now let $K$ be a subgroup of $G$, with the same rank as $G$. We use the previous result to distill a classical theorem of Borel into the following presentation of $H^*(G/K; \Q)$.  Let $\pi : BK \to BG$ be the map defined by the inclusion of $K$ in $G$.  We can choose $BK$ in such a way that $\pi$ is a fibre bundle with fibre $G/K$.  Let $j : G/K \to BK$ be the inclusion of a fibre.  The homomorphism $j^*: H^*(BK; \Q) \to H^*(G/K; \Q)$ is surjective since $K$ and $G$ have the same rank.  Then
 $$
 j^*(\ch_q(\pi^*(U)) = 0 \quad  \text{with $q \geq 1$}
 $$
since $j^*(\pi^*(U))$ is a trivial bundle.  We show that the kernel of $j^*$ is the ideal $I$ generated by $\pi^*(\ch_q(U))$ with $q \geq 1$.  Therefore $j^*$ defines an isomorphism
$$
H^*(BK; \Q)/I \to H^*(G/K; \Q).
$$
We can, of course, replace the $\ch_q$ by the Chern classes $c_q$. 

\subsection*{Section \ref{sec:ratelliptic}} 
This section contains a key idea. We can make a very clean transition from $\K$-theory and rational homotopy groups to rational cohomology by using the theory of {\em rationally elliptic spaces} due to Felix, Halperin, and Thomas \cite{felix2001rational}.  Any homogeneous space is a rationally elliptic space.  This section is short but important.  The theory tells us that once we know the rational homotopy groups of a rationally elliptic space we can read off a lot of structural information about its rational cohomology.  For example, in the case where the Euler characteristic of $\pi_*(X)\o\Q$ is zero, it gives a general formula for the Poincar\'{e} series and Euler characteristic of $H^*(X; \Q)$.  

\subsection*{Section \ref{sec:rathomRosenfeld}}
We now focus on the three largest Rosenfeld projective planes. These are the  homogeneous spaces $G/K$ where $G$ and $K$ are given in the following table.
\begin{center}
\begin{tabular}{ | c | c | c | c | c | c | }
\hline
$G$ & $K$ & dim. & (a) & (b) & (c) \\
\hline
$E_6$ & $\Spin(10)\times_{C_4} \U(1)$ & 32  & EIII & $P^2(\O \o \C)$  & $R_5$ \\
\hline
$E_7$ & $\Spin(12)\times_{C_2}\Sp(1)$ & 64  & EVI & $P^2(\O \o \H)$ & $R_6$ \\
\hline
$E_8$  & $\Spin(16)/C_2$ & 128 & EVIII & $P^2(\O\o\O)$ & $R_7$ \\
\hline
\end{tabular}
\end{center}
The last three columns are: (a) the label in Cartan's list of symmetric spaces, (b) the Rosenfeld notation, (c) the notation we use.
In \cite[Table~2]{MR2785763} they appear as the exceptional symmetric spaces of Grassmannian type.  It is usual to complete this to a list of seven, by setting $R_1 = P^2(\R)$, $R_2 = P^2(\C)$, $R_3 = P^2(\H)$ and $R_4 = P^2(\O)$. The projective planes over $\R, \C$ and $\H$ are extremely well-known and $R_4 = F_4 / \Spin(9)$ is given a full treatment in \cite[Chapter~5, Section~19]{MR0102800}. 

In this section we explain how to calculate the rational homotopy groups of these Rosenfeld projective planes and set out what the theory of rationally elliptic spaces tells us about the rational cohomology of these symmetric spaces.
  
\subsection*{Section \ref{sec:ratcohR567}}

We begin with a description of the classical way of calculating $H^*(G/K; \Q)$ when $G$ and $K$ have the same rank, as well as describing our approach to the same calculation. Then we use our method to obtain explicit presentations of the rational cohomology algebras of $R_5, R_6, R_7$. Our treatment is quite brief in the cases of $R_5$ and $R_6$. We state the results of our calculations and check that our presentations are consistent with those in the literature. The most complicated case is $R_7$ and since there does not seem to be any presentation of $H^*(R_7; \Q)$ in the literature we go into much more detail in this case. 

\subsection*{Section \ref{sec:reptheorycomps}}
Here we explain how we do the computations in representation theory required to prove Theorems \ref{thm:minmodgen} and \ref{thm:minmodgenSpineven} in Section \ref{sec:Psimodule} and Lemma \ref{lem:kernelsofhmaps} in Section \ref{sec:rathomRosenfeld}.

\subsection*{Computations}

Inevitably, the calculations in Sections \ref{sec:ratcohR567} and \ref{sec:reptheorycomps} require the use of a computational algebra package, and we use \Magma{}. The website \url{https://github.com/adamthomas22/LieGroupsRepTheoryCalcs} provides accompanying code for these calculations. In particular, it contains explicit \Magma{} files for each of the computations we do. 

\subsection*{Acknowledgments}
The authors thank the anonymous referee for their careful reading and many helpful suggestions. 

This research was funded in part by the EPSRC, EP/W000466/1 (Thomas). For the purpose of open access, the authors have applied a Creative Commons Attribution (CC BY) licence to any Author Accepted Manuscript version arising from this submission.

\subsection*{Statements and Declarations}

On behalf of all authors, the corresponding author states that there is no conflict of interest. No datasets were generated or analysed during the current study.

\section{The $\K$-theory of Lie groups and symmetric spaces} \label{sec:Ktheorybasics} 

\subsection{The $\K$-theory of Lie groups} 
Recall that for any connected compact Hausdorff space $X$ 
$$
\K^1(X) = [X, \U].
$$
Here $[ , ] $ means homotopy classes of maps and $\U = \U(\infty)$ is the direct limit of the unitary groups $
\U(n)$ where $\U(n)$ is embedded as $\U(n) \oplus 1$ in $\U(n+1)$.  

Now suppose $G$ is a Lie group and $\rho : G \to \U(n)$ is a representation of $G$. We can compose $\rho$ with the stabilisation homomorphism $\U(n) \to \U$ to get the element
$$
\beta(\rho)  \in \K^1(G).
$$
This gives a homomorphism of abelian groups 
\[ \beta : R(G) \to \K^1(G).\]

In \cite[Lemma~4.1]{MR214099} Hodgkin proves that the kernel of this homomorphism is
$$
\Z \oplus I^2(G) \subseteq R(G).
$$
As in the introduction write
$$
V(G) = R(G)/(\Z \oplus I^2(G))
$$
for the indecomposable quotient of the representation ring $R(G)$. It is a free abelian group of rank equal to the rank of $G$.  We will often identify $V(G)$ with a subgroup of $\K^1(G)$ using $\beta$. We can now state Hodgkin's theorem
\cite[Theorem~A]{MR214099}.

\begin{theorem} \label{thm:Hodgkinsthm} Let $G$ be a Lie group with $\pi_1(G)$ free abelian. Then
$$
\K^*(G) = \Lambda^*(V(G))
$$
is the $\Z/2$-graded exterior algebra over $\Z$ generated by $V(G) \subseteq \K^1(G)$.
\end{theorem}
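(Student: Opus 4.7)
The plan is to combine Hodgkin's Tor formula (cited in the introduction) with a structural description of the representation ring.

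First, I would construct the comparison map. The homomorphism $\beta : R(G) \to \K^1(G)$ vanishes on $\Z \oplus I(G)^2$, so it descends to an injection $V(G) \hookrightarrow \K^1(G)$. Graded commutativity of the cup product in $\K^*$ then extends this to a ring map $\Phi : \Lambda^*(V(G)) \to \K^*(G)$, provided the squares of the classes $\beta(\rho)$ vanish. Graded-commutativity alone only forces these squares to be $2$-torsion, so one has to check integral vanishing separately, either via the $H$-space primitivity of these classes (each $\beta(\rho)$ is the pull-back of a primitive class on $\U$) or, equivalently, from the Tor computation below.

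Second, I would feed in the Tor formula $\K^*(G) \cong \Tor^*_{R(G)}(\Z,\Z)$. The hypothesis that $\pi_1(G)$ is free abelian is the key input controlling the commutative algebra of $R(G)$: it implies that $R(G)$ is isomorphic to a tensor product of a polynomial ring and a Laurent polynomial ring on a total of $l = \mathrm{rank}(G)$ variables. For simply connected $G$ this is the polynomial presentation $R(G) = \Z[\rho_1,\dots,\rho_l]$ already recorded in the introduction; in general one reduces via a maximal torus $T$ and the identification $R(G) = R(T)^W$, combined with the Pittie--Steinberg invariant theory that handles the case of non-trivial $\pi_1$. In particular $R(G)$ is a smooth augmented $\Z$-algebra with $I(G)/I(G)^2 = V(G)$, and the Koszul resolution of $\Z$ over $R(G)$ gives
\[ \Tor^*_{R(G)}(\Z,\Z) \;\cong\; \Lambda^*\bigl(I(G)/I(G)^2\bigr) \;=\; \Lambda^*(V(G)) \]
as $\Z/2$-graded rings.

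Finally, I would identify $\Phi$ with this composite by tracing the edge homomorphism of Hodgkin's spectral sequence: on the first column of the $E_\infty$-page it recovers precisely the map $\beta$, so the Koszul generators correspond to the classes $\beta(\rho)$ and the multiplicative isomorphism descends to $\Phi$. The main obstacle is the structural claim that $R(G)$ has polynomial-times-Laurent form when $\pi_1(G)$ is free abelian, which uses non-trivial invariant theory (the hypothesis is exactly what is needed for this); after that, the delicate point is to upgrade the additive Koszul calculation to a multiplicative identification, which in turn pins down the integral vanishing of the squares of odd generators needed in the first step.
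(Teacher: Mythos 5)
Your proposal is correct and follows essentially the same route the paper sketches: Hodgkin's K\"unneth spectral sequence $E_2 = \Tor^{*}_{R(G)}(\Z,\Z) \Rightarrow \K^*(G)$, collapse detected by $\beta$ on the first column, and the identification of $\Tor^{*}_{R(G)}(\Z,\Z)$ with the exterior algebra on $I(G)/I^2(G) = V(G)$. The commutative-algebra input you spell out (the structure of $R(G)$ for $\pi_1(G)$ free abelian and the Koszul resolution) is exactly what the paper compresses into the assertion that $E_2$ is multiplicatively generated by $E_2^{1,*}$, and your worry about the integral vanishing of squares of odd generators is the point handled in Hodgkin's original argument by first proving $\K^*(G)$ torsion free.
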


The first step in Hodgkin's proof is to show that $\K^*(G)$ is torsion free. It follows from this that $\K^*(G)$ is a $\Z/2$-graded Hopf algebra over $\Z$, and this allows him to use facts about Hopf algebras to complete the proof.  In \cite{MR178092}, Atiyah gives a different way of completing the proof, assuming that $\K^*(G)$ is torsion free, by using the Chern character.

This result can be reformulated to say that 
$$
\K^*(G) = \Tor^*_{R(G)}(\Z, \Z),
$$
as $\Z/2$-graded rings.

\subsection{The $\K$-theory of homogenous spaces.} 
Let $G$ be a Lie group and let $K$ be a subgroup of $G$.  In \cite{AH} Atiyah and Hirzebruch describe a natural homomorphism
 $$
 \alpha : R(K) \to \K^0(G/K)
 $$
This homomorphism associates to a representation $\rho : K \to \U(n)$ the homogeneous vector bundle $V_{\rho}= G \times_{\rho} \C^n$.  It is clear that if the representation $\rho$ extends to $G$ then $V_{\rho}$ is isomorphic to the product bundle $G/K \times \C^n$.  In this way we get a homomorphism
$$
\alpha : R(K) \otimes_{R(G)} \Z \to \K^0(G/K).
$$
Atiyah and Hirzebruch conjectured that if $G$ and $K$ have the same rank then this ring homomorphism is surjective.
In \cite{MR0290402} (see also \cite{MR0372897}), Harsh Pittie  proves the following result.

\begin{theorem} \label{thm:Pittiesthm}
Let $G$ be a Lie group such that $\pi_1(G)$ is a free abelian group. Let $K$ be a subgroup of $G$ with the same rank as $G$. Then
$$
\alpha : R(K)\otimes_{R(G)} \Z \to \K^0(G/K) 
$$
is a ring isomorphism and
$$
\K^1(G/K) = 0.
$$
\end{theorem}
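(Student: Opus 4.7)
The plan is to combine an Eilenberg--Moore type spectral sequence converging to $\K^*(G/K)$ with the freeness theorem of Pittie and Steinberg for representation rings. The central algebraic input is that under the hypotheses of the theorem the restriction map makes $R(K)$ into a free module of finite rank over $R(G)$.

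I would first establish this freeness. When $G$ is simply connected and semisimple this is Steinberg's theorem, proved using $R(G) = R(T)^W$ for a maximal torus $T \subseteq K \subseteq G$ and comparing the chain $R(G) \subseteq R(K) \subseteq R(T)$, exploiting that $R(T)$ is free over both $R(G)$ and $R(K)$. The extension to the case $\pi_1(G)$ free abelian is Pittie's refinement, obtained by reducing to the simply connected case via the universal cover. This step is the main obstacle, and its proof is representation theoretic rather than topological.

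Next I would apply the spectral sequence of the Borel fibration $G/K \to BK \to BG$. Using the Atiyah--Segal completion theorem to identify $\K^*(BG) = \hat R(G)$ and $\K^*(BK) = \hat R(K)$, one obtains a convergent spectral sequence
\[
E_2^{p,q} = \Tor_{\hat R(G)}^{p,q}(\hat R(K), \Z) \Longrightarrow \K^{p+q}(G/K).
\]
Because $R(K)$ is a finitely generated free $R(G)$-module, the same is true after completion, so $\Tor_{\hat R(G)}^p(\hat R(K), \Z) = 0$ for all $p > 0$. The spectral sequence therefore collapses at $E_2$ into a single column, with no extension problems, giving $\K^1(G/K) = 0$ together with an isomorphism
\[
\K^0(G/K) \iso \hat R(K) \otimes_{\hat R(G)} \Z \iso R(K) \otimes_{R(G)} \Z.
\]

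It remains to identify this isomorphism with the homomorphism $\alpha$ of the theorem. This is a naturality check: the associated bundle construction $\rho \mapsto G \times_\rho \C^n$ is precisely what implements the edge homomorphism of the spectral sequence, so $\alpha$ agrees with the isomorphism constructed above on generators and respects the ring structure.
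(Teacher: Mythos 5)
Your proposal shares the paper's essential algebraic engine --- the Pittie--Steinberg theorem that $R(K)$ is a finitely generated free $R(G)$-module, which kills all higher $\Tor$ and collapses a $\Tor$-spectral sequence onto its $p=0$ column --- but you set up the spectral sequence differently. The paper uses Hodgkin's K\"unneth spectral sequence in $G$-equivariant $\K$-theory applied to the pair $X = G/K$, $Y = G$, which directly yields
$$
E_2^{p,q} = \Tor^{p,q}_{R(G)}(R(K), \Z) \Longrightarrow \K^*(G/K)
$$
with \emph{uncompleted} representation rings, whereas you run the Eilenberg--Moore spectral sequence of the Borel fibration $G/K \to BK \to BG$ and invoke Atiyah--Segal to rewrite the $E_2$-page over $\hat R(G)$. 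Your route has the aesthetic advantage of staying in non-equivariant topology, and the passage from completed to uncompleted rings is harmless here precisely because of freeness; the identification of the edge map with $\alpha$ is also as routine as you claim.

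The genuine gap is the word ``convergent.'' For generalized cohomology theories the Eilenberg--Moore spectral sequence of a fibration does \emph{not} converge in general, and for $\K$-theory over an infinite-dimensional base such as $BG$ this is a serious issue, not a formality. This is exactly the point where all the historical difficulty lies: Hodgkin could only prove convergence of his (equivariant K\"unneth) spectral sequence in special cases, Snaith studied it further, and the convergence for $\pi_1(G)$ free abelian is a theorem of McLeod. Your argument silently replaces that theorem with an unproved convergence claim for the non-equivariant EMSS over $BG$, which is at least as hard (and is usually deduced \emph{from} the equivariant statement rather than the other way around). To repair the proposal you must either cite a convergence theorem for the $\K$-theoretic EMSS applicable to $BK \to BG$, or retreat to the paper's route: apply the Hodgkin--McLeod K\"unneth spectral sequence with $X = G/K$ and $Y = G$, use $\K_G^*(G/K) = R(K)$, $\K_G^*(G) = \Z$, $\K_G^*(G/K \times G) = \K^*(G/K)$, and then let Steinberg's freeness finish the argument exactly as you describe.
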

Pittie does not explicitly state that $\K^1(G/K) = 0$ but his method obviously proves it, as we explain in the next section. 

\subsection{On the proofs.}
The above results are both consequences of the K\"{u}nneth theorem, due to Luke Hodgkin \cite[Theorem~1, p.4]{MR0478156}, in Atiyah--Segal equivariant $\K$-theory. Let $G$ be a Lie group, such that $\pi_1(G)$ is free abelian.  Let $X$ and $Y$ be compact $G$-spaces that are locally contractible and of finite covering dimension. There is a spectral sequence with $E_2$ page
$$
E_2^{p, q} = \Tor^{p,q}_{R(G)}(\K_G^*(X), \K_G^*(Y)) \Longrightarrow \K_G^*(X \times Y).
$$
$$
d_r : E_r^{p, q} \to E_r^{p-r, q+r-1}
$$
Here $p \in \Z$ and $p\geq 0$ and $q \in \Z/2$ since $\K_G^*$ is $\Z/2$-graded.  

For example the $E_2$ page has two rows
$$
E_2^{p,0} = \Tor^{p}_{R(G)}(\K_G^0(X), \K_G^0(Y)) \oplus \Tor^{p}_{R(G)}(\K_G^1(X), \K_G^1(Y))
$$
$$
E_2^{p,1} = \Tor^{p}_{R(G)}(\K_G^0(X), \K_G^1(Y)) \oplus \Tor^{p}_{R(G)}(\K_G^1(X), \K_G^0(Y)).
$$
Hodgkin was only able to prove the convergence of the spectral sequence in special cases. The proof that it converges when $\pi_1(G)$ is a free abelian group was completed by McLeod in \cite{MR0557175}. 

For example, when $X = G/K$ and $Y = G$  we can use the isomorphisms
$$
\K_G^*(G/K) = R(K),  \quad \K_G^*(G) = \Z, \quad  \K_G^{*}(G/K \times G) = \K^*(G/K)
$$ 
to get a spectral sequence
$$
E_2^{p,q} = \Tor^{p,q}_{R(G)}(R(K), \Z) \Longrightarrow  \K^*(G/K).
$$
Minami proved that this spectral sequence collapses at the $E_2$ page in \cite[Theorem~2.1]{MR415646}.

A proof of Theorem \ref{thm:Hodgkinsthm} is given by taking $K$ to be the trivial subgroup obtaining a spectral sequence 
$$
E_2^{p,q} = \Tor^{p,q}_{R(G)}(\Z, \Z) \Longrightarrow  \K^*(G),
$$
which collapses at the $E_2$ page. This gives a proof of Hodgkin's theorem. 

\section{Adams operations in $\K^*(G)$} \label{sec:Adamsops}
Hodgkin's theorem tells us that if $G$ is a Lie group with $\pi_1(G)$ free abelian, the $\Z/2$-graded ring $\K^*(G)$ is generated by elements in $\K^1(G)$. In this section we explain what we mean by Adams operations in a $\Z/2$-graded ring, and extend Hodgkin's theorem by working out the Adams operations in $\K^*(G)$.

\subsection{$\Psi$-modules} \label{sub:_psi_modules}
By a $\Psi$-module, we mean an abelian group $A$ equipped with a sequence of endomorphisms $\psi^k$, $k \geq 1$ such that
$$
\psi^1 = 1, \quad \psi^k\psi^l = \psi^{kl}.
$$
Such operations are usually referred to as Adams operations.

We need some notation.  Let $A$ be a $\Psi$-module.  An element $a \in A$ has {\em weight $r$} (for some non-negative integer $r$) if $\psi^ka = k^ra$ for all $k$. Let $\Z(r)$ be the $\Psi$-module freely generated (as an abelian group) by a single element of weight $r$.  In this notation 
$$
\tilde{\K}^0(S^{2r}) = \Z(r)
$$
as $\Psi$-modules (where $\tilde{\K}^0(X)$ denotes the usual reduced K-theory of $X$). We also use the notation
$$
\Z(r_1, \dots, r_l) = \Z(r_1) \oplus \dots \oplus \Z(r_l)
$$$$
 \Q(r_1, \dots, r_l) = \Z(r_1, \dots, r_l)\o\Q.
 $$

Here is an example which is important for us.  We take for granted the usual theory of $\Lambda$-rings (see \cite{MR0244387}). Let $R$ be a $\Lambda$-ring with augmentation ideal $I$ and suppose $I^2 = 0$.  Then as a ring
$$
R = \Z \oplus I
$$
with product
$$
(n + x)(m+y) = nm + ny + mx
$$
where $n,m \in \Z$, and $x,y \in I$.  It follows that if $x, y \in I$, then
\begin{align*}
\lambda^k(x+y) = & \ \lambda^k(x) + \lambda^k(y),\\
\lambda^k(xy) = & \ \lambda^k(x)\lambda^k(y) = 0, \\
\lambda^k(\lambda^l(x)) = & \ (-1)^{(k+1)(l+1)}\lambda^{kl}(x).
\end{align*}

The inductive formula for the Adams operations
\[
\psi^{k}(x)
=
(-1)^{k+1}\,k\,\lambda^{k}(x)
\;+\;
\sum_{i=1}^{k-1}(-1)^{i-1}\,\lambda^{i}(x)\,\psi^{k-i}(x)
\]
shows that if $x \in I$, then 
$$
\psi^k(x) = (-1)^{k+1}k\lambda^k(x).
$$
 If $n \in \Z$ and $x \in I$ then $\psi^k(n+x) = n + \psi^k(x)$, and so, in general, it is not equal to $(-1)^{k+1}k\lambda^k(n + x)$. So the whole theory of $\Lambda$-operations in $R$ boils down to the fact that the $\lambda^k$-operations define a $\Psi$-module structure on $I$.

If we start with the $\Lambda$-ring $R(G)$ where $G$ is a Lie group we can form the $\Lambda$-ring 
$$
S(G) = R(G)/I^2(G).
$$
Then the $\Lambda$-ring structure is determined by the $\Psi$-module structure of the augmentation ideal $I(G)/I^2(G)$ of $S(G)$.  We now have two quotient homomorphisms
$$
R(G) \to S(G) \to V(G)
$$
where, as in the introduction,  $V(G) = R(G)/(\Z \oplus  I^2(G))$.  Now $\Z \oplus I^2(G)$ is sub-$\Psi$-module of $R(G)$ so $V(G)$ inherits a $\Psi$-module structure.  The first quotient is a map of $\Lambda$-rings and the second is a map of $\Psi$-modules.  It is straightforward to check that the composition
$$
I(G)/I^2(G) \to S(G) \to V(G)
$$
is an isomorphism of $\Psi$-modules.

\subsection{$\Z/2$-graded $\Psi$-rings}
By definition, a $\Psi$-ring is a ring equipped with ring endomorphisms $\psi^k$ such that $\psi^1 = 1$ and $\psi^k\psi^l = \psi^{kl}$.  Any $\Lambda$-ring becomes a $\Psi$-ring where the $\psi^k$ are the Adams operations associated to the $\lambda$-operations.   

In \cite{MR1373816} Bousfield gives a thorough account of the general theory of $\Z/2$-graded $\Lambda$-rings.  Here we give a self-contained account of the basic theory of $\Z/2$-graded $\Psi$-rings, which is adequate for our applications. This is a pared down version of the theory, but as Bousfield points out, if the ring is torsion free the Adams operations determine the $\lambda$-operations. 

A {\em $\Z/2$-graded $\Psi$-ring} is a $\Z/2$-graded ring $A^*$, which is strictly graded commutative, and is equipped with $\Psi$-module structures $\psi^k$ on $A^0$, and $\phi^k$ on $A^1$ with the following properties.
\begin{enumerate}
\item The operations $\psi^k$ make $A^0$ into a $\Psi$-ring.
\item For $x \in A^0$ and $y \in A^1$, $\phi^k(xy) = \psi^k(x)\phi^k(y)$.
\item For $x,y \in A^1, \psi^k(xy) = k\phi^k(x)\phi^k(y)$.
\end{enumerate}

Let $X$ be a compact Hausdorff space. Then $\K^0(X)$ is a $\Lambda$-ring and the corresponding Adams operations make $\K^0(X)$ into a $\Psi$-ring. We define operations $\phi^k$ on $\K^1(X)$ which give a $\Psi$-module structure on $\K^1(X)$.

The ring $\K^0(\Sigma X)$ is a $\Lambda$-ring with augmentation ideal $\tilde{\K}^0(\Sigma X)$.  If $x,y \in \tilde{\K}^0(\Sigma X)$ then  $xy = 0$.  So it follows that the operations $\lambda^k$ define a $\Psi$-module structure on $\tilde{\K}^0(\Sigma X)$. Now we use the standard natural isomorphism
$$
\K^1(X) \to \tilde{\K}^0(\Sigma X)
$$
to transfer these operations on $\tilde{\K}^0(\Sigma X)$ to $\K^1(X)$. Define 
$$
\phi^k  : \K^1(X) \to \K^1(X)
$$
to be the endomorphism, of abelian groups, defined by 
$$
(-1)^{k+1}\lambda^k : \tilde{\K}^0(\Sigma X) \to \tilde{\K}^0(\Sigma X).
$$
This defines a $\Psi$-module structure on $\K^1(X)$. The sign convention is the one needed to get the correct signs in the following lemma. 

\begin{lemma} \label{lem:Z2gradedPsiRing}
The operations $\psi^k$ on $\K^0(X)$ and $\phi^k$ on $\K^1(X)$ make $\K^*(X)$ into a $\Z/2$-graded $\Psi$-ring.
\end{lemma}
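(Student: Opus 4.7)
The plan is to verify in turn the three defining conditions for $\K^*(X)$ to be a $\Z/2$-graded $\Psi$-ring. Condition (i) is exactly the classical statement that Adams operations make the $\lambda$-ring $\K^0(X)$ into a $\Psi$-ring, so there is nothing to prove there. The substantive work lies in (ii) and (iii), both of which govern products involving $\K^1(X)$.

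The key observation that drives the whole argument is the following translation. Under the suspension isomorphism $\sigma : \K^1(X) \to \tilde{\K}^0(\Sigma X_+)$, the image $\sigma(y)$ of any $y \in \K^1(X)$ lies in the augmentation ideal of the $\lambda$-ring $\K^0(\Sigma X_+)$, whose square is zero. Combining the defining formula $\phi^k = (-1)^{k+1}\lambda^k$ with the earlier identity $\psi^k = (-1)^{k+1}k\lambda^k$ on an augmentation ideal that squares to zero, one gets the pivotal relation
\[
\psi^k(\sigma(y)) \;=\; k\cdot\sigma(\phi^k(y)).
\]
So applying the honest $\Psi$-ring operation $\psi^k$ to a $\K^1$-class, after transport via $\sigma$, is the same as applying $k\cdot\phi^k$. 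This is the only fact about $\phi^k$ that the rest of the proof uses.

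For conditions (ii) and (iii) I would realise the ring structure on $\K^*(X)$ as $\Delta^*$ of the external product, where $\Delta : X \to X \times X$ is the diagonal, and then pass through the natural identifications $\Sigma X_+ \wedge \Sigma X_+ = \Sigma^2(X \times X)_+$ and the Bott isomorphism $\tau$. For two classes $y_1, y_2 \in \K^1(X)$ the external product $y_1 \times y_2 \in \K^0(X \times X)$ is $\tau$ applied to $\sigma(y_1)\wedge\sigma(y_2)$. Applying $\psi^k$ to this smash product and using multiplicativity of $\psi^k$ on external smashes produces two factors of $k$ via the pivotal identity; passing back through $\tau$ absorbs a single factor of $k$, because $\psi^k(\beta) = k\beta$ on the Bott class forces $\tau\psi^k = k\psi^k\tau$. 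The net bookkeeping gives $\psi^k(y_1 \times y_2) = k\,\phi^k(y_1)\times\phi^k(y_2)$, and pulling back along $\Delta^*$ yields (iii). Condition (ii) is handled by the same device with $x\in\K^0(X)$, $y\in\K^1(X)$: the external product $x\times y \in \K^1(X\times X)$ lifts under $\sigma$ to $x\wedge\sigma(y)\in\tilde{\K}^0(\Sigma(X\times X)_+)$, and now exactly one factor of $k$ appears on each side and cancels, leaving $\phi^k(x \times y) = \psi^k(x)\times\phi^k(y)$, which descends under $\Delta^*$ to $\phi^k(xy) = \psi^k(x)\phi^k(y)$.

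The main obstacle is really just bookkeeping: keeping the factors of $k$ coming from the Bott class consistent with the factors of $k$ hidden in the definition $\phi^k = \psi^k/k$ on the augmentation ideal, and verifying that the signs $(-1)^{k+1}$ in the definition of $\phi^k$ conspire correctly with the sign conventions for the external smash product. Once the pivotal identity $\psi^k\circ\sigma = k\cdot\sigma\circ\phi^k$ and the Bott intertwining $\tau\circ\psi^k = k\cdot\psi^k\circ\tau$ are in place, both (ii) and (iii) follow from naturality of $\sigma$, $\tau$ and $\Delta^*$ together with the multiplicativity of $\psi^k$ on $\K^0$ of smash products.
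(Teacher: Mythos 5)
Your proposal is correct and follows essentially the same route as the paper: realise the products on $\K^*(X)$ via external smash products and the (suspended) diagonal, transport $\K^1$ through the suspension isomorphism, and deduce (ii) and (iii) from multiplicativity of $\psi^k$ on $\K^0$, the definition $\phi^k=(-1)^{k+1}\lambda^k$, and the intertwining of $\psi^k$ with Bott periodicity. Your explicit identities $\psi^k\circ\sigma=k\cdot\sigma\circ\phi^k$ and $\psi^k\circ\tau=k\cdot\tau\circ\psi^k$ are exactly the ingredients the paper invokes (it leaves them, and the sign bookkeeping, at the same level of detail you do).
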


\begin{proof}
The proof is an exercise in the theory of products in $\K$-theory, see \cite[Chapter~2, Section~2.6]{atiyah2018k}. The basic product is the external tensor product
$$
\K^{0}(X) \otimes \K^0(Y) \to \K^0(X \times Y),
$$
where $Y$ is also a compact Hausdorff space. For this particular exercise it is best to use the reduced version 
$$
\tilde{\K}^{0}(X) \otimes \tilde{\K}^0(Y) \to \tilde{\K}^0(X \wedge Y).
$$
We assume that $X, Y$ are connected, and we have chosen base points $x_0 \in X$, $y_0 \in Y$.  By definition
$$
X\wedge Y = X\times Y /(X\times y_0 \cup x_0 \times Y).
$$
Next take $X = Y$ and use the reduced diagonal $\Delta : X \to X \wedge X$ to get the internal product
$$
\tilde{\K}^{0}(X) \otimes \tilde{\K}^0(X) \to \tilde{\K}^0(X)
$$
Finally replace $X$ by $S^p\wedge X$ and $Y$ by $S^q\wedge X$ and use the appropriate suspension of $\Delta$ to get the product 
$$
\tilde{\K^0}(S^p \wedge X) \otimes \tilde{\K}^0(S^q\wedge X) \to \tilde{\K}^0(S^p \wedge X\wedge S^q \wedge X) \to \tilde{\K}^0(S^{p+q}\wedge X).
$$
We have to interchange $X$ and $S^q$, this introduces the sign $(-1)^q$, and use the standard identification of $S^p\wedge S^q$ with $S^{p+q}$.

Now use Bott periodicity to identify $\tilde{\K}^0(S^p \wedge X)$ with $\tilde{\K}^0( X)$ if $p$ is even, or with $\K^1(X)$ if $p$ is odd.   We end up with products  
$$
\K^1(X) \otimes\tilde{\K}^0(X)  \to \tilde{\K}^0(X), \quad \K^1(X) \otimes \K^1(X) \to \tilde{\K}^0(X).
$$
The verification of the formulas in the above definition follow from the fact that the Adams operations are ring homomorphisms of $\K^0(X)$, the definition of the operations $\phi^k$ in $\K^1(X)$ and the relation between the Adams operations and Bott periodicity.
\end{proof}

\subsection{The $\Z/2$-graded $\Psi$-ring $\K^*(G)$}
As usual, we assume that  $G$ is a Lie group with $\pi_1(G)$ free abelian.  

\begin{theorem} \label{thm:Z2gradedringisos}
The $\Z/2$-graded $\Psi$-ring $\K^*(G)$ is isomorphic to $\Lambda(V(G))$, the $\Z/2$-graded exterior algebra over $\Z$, generated by $V(G) \subseteq \K^1(G)$ equipped with the unique $\Z/2$-graded $\Psi$-ring structure determined by the $\Psi$-module structure on $V(G)$.  
\end{theorem}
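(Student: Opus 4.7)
The plan is to upgrade Hodgkin's ring-level isomorphism $\K^*(G) \cong \Lambda(V(G))$ from Theorem \ref{thm:Hodgkinsthm} to an isomorphism of $\Z/2$-graded $\Psi$-rings, using Lemma \ref{lem:Z2gradedPsiRing} which endows $\K^*(G)$ with a canonical such structure. The argument splits into two independent pieces: a uniqueness statement for $\Z/2$-graded $\Psi$-ring structures on an exterior algebra, and the verification that the map $\beta: V(G) \to \K^1(G)$ respects the $\Psi$-module operations.

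For uniqueness, I would unwind axioms (i)--(iii) of a $\Z/2$-graded $\Psi$-ring applied to $\Lambda(M)$ on a $\Psi$-module $M$. Axiom (iii) computes $\psi^k(xy) = k\phi^k(x)\phi^k(y)$ for $x, y \in M \subseteq A^1$, which determines $\psi^k$ on $\Lambda^2(M) \subseteq A^0$. Axiom (ii) then propagates $\phi^k$ from $\Lambda^1(M) = M$ to $\Lambda^3(M) = \Lambda^2(M) \cdot M$ via $\phi^k(\omega v) = \psi^k(\omega) \phi^k(v)$, and alternating between the two axioms extends both $\psi^k$ and $\phi^k$ to all of $\Lambda(M)$ by induction on exterior degree. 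A short check confirms the resulting formulas are well-defined and compatible with axiom (i). Hence $\phi^k|_M$ alone determines the entire $\Z/2$-graded $\Psi$-ring structure.

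For the second piece, I would exhibit $\beta$ as the composition $R(G) \to \K^0(BG) \to \tilde{\K}^0(\Sigma G) = \K^1(G)$, where the first arrow is the Atiyah--Segal homomorphism and the second is pullback along the map $\Sigma G \to BG$ adjoint to the weak equivalence $G \simeq \Omega BG$. Both arrows are $\Psi$-ring maps commuting with $\lambda^k$, and restricting to augmentation ideals then passing to the square-zero quotients $I(G)/I^2(G) = V(G)$ and $\tilde{\K}^0(\Sigma G) = \K^1(G)$ transfers the $\lambda^k$-operations compatibly. The identity $\psi^k = (-1)^{k+1}k\lambda^k$ on a square-zero augmentation ideal, together with the definition $\phi^k = (-1)^{k+1}\lambda^k$, is arranged precisely so that $\beta$ intertwines the $\Psi$-module operations. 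The main obstacle is the sign and scalar bookkeeping needed to verify that the $\Psi$-module structure on $V(G)$ inherited from $R(G)$ matches the restriction of $\phi^k$ to $V(G) \subseteq \K^1(G)$ on the nose, rather than up to a twist; once that matching is in hand, combining it with the uniqueness statement upgrades Hodgkin's ring isomorphism to the desired isomorphism of $\Z/2$-graded $\Psi$-rings.
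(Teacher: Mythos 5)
Your decomposition of the problem --- (a) the $\Z/2$-graded $\Psi$-ring structure on $\Lambda(M)$ is uniquely determined by the operations $\phi^k$ on $M \subseteq A^1$, and (b) $\beta$ intertwines the operations on $V(G)$ with those on $\K^1(G)$ --- is exactly the paper's: (a) is left implicit in the wording of the theorem, and (b) is isolated as Lemma \ref{lem:homofpsimodules}, from which the theorem is said to follow directly. Your route to (b), factoring $\beta$ as $R(G) \to \K^0(BG) \to \tilde{\K}^0(\Sigma G)$, coincides with the paper's secondary argument at the end of Section 2: the join $(G*G)/G \simeq \Sigma G$ is the first stage of the Milnor model of $BG$, the map $\alpha$ there is the restriction of the Atiyah--Segal map, and the commutative square involving $\alpha$, $\sigma$ and the projection $p$ is precisely your factorization. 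The paper's primary argument is instead a direct one via the clutching construction and the identity $\lambda^k(V(f)) = V(\lambda^k(f))$, with the remaining details deferred to Atiyah's book. The one substantive point you flag but do not discharge --- the ``sign and scalar bookkeeping'' matching the $\Psi$-module structure that $V(G) \cong I(G)/I^2(G)$ inherits from the Adams operations of $R(G)$ with the operations $\phi^k = (-1)^{k+1}\lambda^k$ on $\tilde{\K}^0(\Sigma G)$ --- is in fact the entire content of Lemma \ref{lem:homofpsimodules}, and your caution is warranted: on a square-zero augmentation ideal one has $\psi^k = (-1)^{k+1}k\lambda^k$, so knowing only that the comparison map commutes with the $\lambda^k$ leaves a factor of $k$ per operation to be accounted for (this is the usual instability of Adams operations under suspension, which is why $\phi^k$ is normalized differently from $\psi^k$ in the first place). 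You should either carry out this normalization explicitly or test it on $\U(1)$ and $\SU(2)$ against Theorem \ref{thm:isoswithtype} before declaring the matching ``on the nose''; the paper itself is terse at exactly this point.
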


The proof of Theorem \ref{thm:Z2gradedringisos} follows directly from the following lemma. Recall that both $R(G)$ and $\K^1(G)$ are $\Psi$-modules. 
\begin{lemma} \label{lem:homofpsimodules}
The homomorphism $\beta: R(G) \to \K^1(G) = \tilde{\K}^0(\Sigma G)$ is a homomorphism of $\Psi$-modules.
\end{lemma}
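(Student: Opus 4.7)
The plan is to factor $\beta$ through the classifying space $BG$ and then invoke naturality of $K$-theoretic operations.

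First I would establish the identity
\[
\beta(\rho) \;=\; \sigma^{\ast}\bigl([E_{\rho}] - \dim\rho\bigr)
\quad\text{in } \tilde{\K}^{0}(\Sigma G) = \K^{1}(G),
\]
where $E_{\rho} = EG\times_{\rho}\C^{n}$ is the bundle over $BG$ classified by $B\rho\colon BG\to BU$, and $\sigma\colon\Sigma G\to BG$ is the counit of the adjunction $\Sigma\dashv\Omega$, adjoint to the identity $G\to\Omega BG\simeq G$. This is a pointed-homotopy diagram chase: by naturality of the counit applied to $B\rho$, the composite $B\rho\circ\sigma\colon\Sigma G\to BU$ corresponds, under the adjunction $[G,U]=[\Sigma G, BU]_{\ast}$, to the stabilisation $G\xrightarrow{\rho} U(n)\hookrightarrow U$, which by definition represents $\beta(\rho)$.

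Second I would combine two compatibility statements. The assignment $R(G)\to\K^{0}(BG)$, $\rho\mapsto[E_{\rho}]$, is a homomorphism of $\lambda$-rings---exterior powers of a representation classify exterior powers of the associated bundle---so in particular it intertwines Adams operations. The pullback $\sigma^{\ast}$ commutes with every natural operation in $K$-theory, in particular with $\psi^{k}$. Chaining these gives
\[
\psi^{k}\beta(\rho)
= \sigma^{\ast}\psi^{k}\bigl([E_{\rho}]-\dim\rho\bigr)
= \sigma^{\ast}\bigl([E_{\psi^{k}\rho}]-\dim\psi^{k}\rho\bigr)
= \beta(\psi^{k}\rho),
\]
using $\psi^{k}(n)=n$ and $\dim\psi^{k}\rho=\dim\rho$. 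Therefore $\beta$ intertwines the Adams operations on $R(G)$ with the Adams operations on $\tilde{\K}^{0}(\Sigma G)$. Because the augmentation ideal of $\tilde{\K}^{0}(\Sigma G)$ squares to zero, the Newton relation $\psi^{k}=(-1)^{k+1}k\lambda^{k}$ recalled earlier in the section translates this Adams statement directly into the $\phi^{k}$-form defining the $\Psi$-module structure on $\K^{1}(G)$.

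The only real obstacle is the factorisation in the first step; everything that follows is formal naturality. The care required is in fixing the identification $\K^{1}(G)=[\Sigma G, BU]_{\ast}$ cleanly, tracking reduced versus unreduced classes through $\sigma^{\ast}$, and verifying that the basepoint conventions built into the definition of $\beta$ match those used in the suspension isomorphism.
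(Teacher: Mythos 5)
Your argument is correct, but it follows a different route from the paper's primary proof. The paper works directly on $\Sigma G$ with the clutching construction: it forms the bundle $V(f)$ over $\Sigma G = C_+(G)\cup_G C_-(G)$ from a map $f:G\to \U(n)$, observes that $\lambda^k(V(f)) = V(\lambda^k(f))$, and then refers to Atiyah's book to convert this into the statement about the reduced classes. You instead factor $\beta$ through $BG$ via the counit $\sigma:\Sigma G\to BG$ of the loop--suspension adjunction, use that $\rho\mapsto [E_\rho]$ is a map of $\Lambda$-rings $R(G)\to \K^0(BG)$, and then invoke naturality of $\psi^k$ under $\sigma^*$. This is essentially a cleaner packaging of the paper's own \emph{alternative} argument given in the remark following its proof: there the join $(G*G)/G\simeq \Sigma G$ (the first stage of the Milnor construction of $BG$) carries the $\Lambda$-ring map $\alpha:R(G)\to\K^0(\Sigma G)$, and the commuting square with $\beta$, the clutching isomorphism $\sigma$, and the projection $p$ to the reduced group plays exactly the role of your factorisation. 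The trade-off: the paper's clutching argument is self-contained and avoids classifying spaces (and the attendant issue that $\K^0(BG)$ of an infinite complex must be interpreted as represented $\K$-theory or via finite skeleta), while your version makes the naturality transparent and isolates the one genuinely geometric input, namely $\beta(\rho)=\sigma^*([E_\rho]-\dim\rho)$. One point to make explicit in your final paragraph: the passage from the $\psi^k$-intertwining identity to the ``$\phi^k$-form'' requires saying which $\Psi$-module structure on the source is meant --- the relation $\psi^k=(-1)^{k+1}k\lambda^k$ holds only on square-zero augmentation ideals, so the translation takes place after passing to $I(G)/I^2(G)$ (equivalently $V(G)$), on which $\beta$ factors by Hodgkin's theorem. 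The paper is equally terse on this point, but it is where the factor of $k$ separating $\psi^k$ from $\phi^k$ has to be accounted for on both sides simultaneously.
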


\begin{proof} 
The isomorphism $\K^1(X) \to \tilde{\K}^0(\Sigma X)$ is defined by the the so-called clutching construction in $\K$-theory.  Given a compact Hausdorff space $X$ we take two copies $C_\pm(X)$ of the cone on $X$ and glue them together along $X$ to get
$$
\Sigma X = C_+(X ) \cup_X C_-(X).
$$
Suppose we have a map $f : X \to U(n)$. This gives us an isomorphism
$$
\hat{f} : X \times \C^n \to X \times \C^n \, , \quad (x,v) \mapsto (x, f(x)(v))
$$
so that we can form the vector bundle
$$
V(f) = C_+(X ) \times \C^n \cup_{\hat{f}}  C_-(X) \times \C^n
$$
over $\Sigma X$.  If we replace $f$ by $\lambda^k(f):  X  \to \U(\lambda^k(\C^n))$  it is clear that
$$
\lambda^k(V(f)) = V(\lambda^k(f)).
$$
It is straightforward to follow through the arguments of \cite[Chapter~3, Section~3.1]{atiyah2018k} in this special case to convert this observation into a proof of the lemma.
\end{proof}

In the case of a Lie group $G$, the $\alpha, \beta$ constructions of Section 1 are related in the following way.  The join $G*G$ has a standard free action of $G$ and $(G*G)/G$ is homotopy equivalent to $\Sigma G$, the suspension of $G$.  This gives us a map $\alpha : R(G) \to \K^0((G*G)/G) = \K^0(\Sigma G)$.  This map is clearly a map of $\Lambda$-rings.  On the other hand we have the map $\beta : R(G) \to \K^1(G)$.  Now we can compare these two maps by using the isomorphism $\sigma : \K^1(G) \to \tilde{\K}^0(\Sigma G)$ defined by the clutching construction. It is not difficult to show that the following diagram commutes
$$
\begin{CD}
R(G) @>{\alpha}>> \K^0(\Sigma G) \\
@V{\beta}VV @VV{p}V \\
\K^1(G) @>>{\sigma}> \tilde{\K}^0(\Sigma G)
\end{CD}
$$
where $p : \K^0(\Sigma G) \to \tilde{\K}^0(\Sigma G)$ is the projection. This gives a different proof of Lemma \ref{lem:homofpsimodules} in this special case.

\section{On the rational homotopy of $G$ and $G/K$} \label{sec:type}
In this section we make preliminary observations about the rational homotopy groups of Lie groups and homogeneous spaces.

\subsection{The type of a Lie group} 
Let $G$ be a Lie group of rank $n$.  Serre proves in \cite[Chapitre~V, Section~3]{MR0059548} that $G$ is rationally homotopy equivalent to a product of odd dimensional spheres, $S^{2r_1 - 1} \times \dots \times S^{2r_n - 1}$,
where
\begin{enumerate}
\item $1 \leq r_1 \leq r_2 \leq \dots \leq r_n$,
\item $\dim G = (2r_1-1) + \dots + (2r_n - 1)$.
\end{enumerate}
The {\em type} of $G$ is the sequence
$$
(2r_1-1,  \dots, 2r_n-1).
$$

In particular, for simply connected simple Lie groups the types are as follows (see \cite[p.11, p.14]{MR45129} for example): 
\begin{align*}
\SU(n):  \quad &  (3,5,7, \dots , 2n-1),\\
\Sp(n): \quad & (3,7,11, \dots , 4n-1),\\
\Spin(2n+1): \quad & (3,7,11, \dots , 4n-1),\\
\Spin(2n): \quad & (3,7,11, \dots , 4n-5, 2n-1),\\
G_2: \quad & (3,11),\\
F_4: \quad & (3,11,15,23),\\
E_6: \quad & (3,9,11,15,17,23),\\
E_7: \quad & (3,11,15,19,23,27,35),\\
E_8: \quad & (3,15,23,27,35,39,47,59).
\end{align*}
Using Serre's result, it is straightforward to calculate the $\Z/2$-graded $\Psi$-ring $\K^*(G)\o\Q$ and the $\Psi$-module $V(G)\o \Q$. 

\begin{theorem} \label{thm:isoswithtype}
Let $G$ be a Lie group of rank $n$ with $\pi_1(G)$ free abelian and let $(2r_1-1, \dots, 2r_n-1)$ be the type of $G$. 
There is an isomorphism of $\Psi$-rings 
\begin{enumerate}
\item \[ \K^*(G)\o \Q \to \Lambda_{\Q}(a_1, \dots, a_n),\]
where the weight of $a_i$ is $r_i$, and an isomorphism of $\Psi$-modules 
\item \[ V(G)\o \Q \to \Q(r_1, \dots, r_n).\]
\end{enumerate}
\end{theorem}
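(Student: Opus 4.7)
The plan is to apply Theorem \ref{thm:Z2gradedringisos} to reduce the problem to identifying the $\Psi$-module $V(G) \otimes \Q$, and then to compute this by transferring through Serre's rational equivalence to a product of odd-dimensional spheres. By Theorem \ref{thm:Z2gradedringisos}, $\K^*(G) \otimes \Q \cong \Lambda_{\Q}(V(G) \otimes \Q)$ as $\Z/2$-graded $\Psi$-rings, so statement (i) is a formal consequence of statement (ii). I therefore focus on (ii).

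To prove (ii), I would invoke Serre's theorem to get a rational homotopy equivalence $G \simeq_{\Q} S^{2r_1-1} \times \cdots \times S^{2r_l-1}$. Rational $\K$-theory, viewed as a $\Z/2$-graded $\Psi$-ring, is a rational homotopy invariant: the Chern character identifies $\K^*(-) \otimes \Q$ with $H^*(-; \Q)$ as $\Z/2$-graded rings, intertwining $\psi^k$ with multiplication by $k^n$ in cohomological degree $2n$, and both sides are natural. Hence the Serre equivalence induces an isomorphism of $\Z/2$-graded $\Psi$-rings
\[
\K^*(G) \o \Q \;\iso\; \bigotimes_{i=1}^l \K^*(S^{2r_i-1}) \o \Q,
\]
using K\"unneth on the right (valid since each factor has torsion-free $\K$-theory).

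Each factor is $\K^*(S^{2r-1}) = \Lambda_{\Z}(u)$ with $u \in \K^1(S^{2r-1})$, and the clutching isomorphism $\K^1(S^{2r-1}) \iso \tilde{\K}^0(S^{2r})$ combined with $\tilde{\K}^0(S^{2r}) = \Z(r)$ identifies the weight of $u$ as $r$. Tensoring yields $\K^*(G) \o \Q \iso \Lambda_{\Q}(a_1, \dots, a_l)$ with $a_i$ of weight $r_i$, which is statement (i). Comparing this with the presentation $\Lambda_{\Q}(V(G) \o \Q)$ from Theorem \ref{thm:Z2gradedringisos} and applying the indecomposables functor $A \mapsto A^+/(A^+)^2$ (which sends an exterior algebra on a $\Psi$-module $W$ in odd degree back to $W$) yields $V(G) \o \Q \iso \Q(r_1, \dots, r_l)$, completing the proof.

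The main delicacy is matching the weight conventions across the chain of identifications: the weight $r$ on $\tilde{\K}^0(S^{2r})$ is defined via $\psi^k$, but the $\Z/2$-graded $\Psi$-ring structure on $\K^*$ uses $\phi^k$ on $\K^1$. One must verify that with the sign convention $\phi^k = (-1)^{k+1}\lambda^k$ of Section \ref{sec:Adamsops}, the $\phi^k$-weight of the clutching generator of $\K^1(S^{2r-1})$ matches the expected weight $r$, after accounting for Bott periodicity and the compatibility with the product axiom $\psi^k(xy) = k \phi^k(x)\phi^k(y)$ for $x, y \in A^1$. The remaining steps are essentially formal given the machinery of Sections \ref{sec:Ktheorybasics} and \ref{sec:Adamsops}.
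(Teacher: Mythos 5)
Your proposal is correct and follows essentially the same route as the paper, whose entire proof is the observation that $\K^1(S^{2r-1})=\Z(a)$ with $a$ of weight $r$ together with Serre's theorem and the K\"unneth formula. You simply make explicit the steps the paper leaves implicit (rational homotopy invariance of $\K^*(-)\o\Q$ via the Chern character, and passing to indecomposables to extract (ii) from (i)), and you correctly flag the $\psi^k$ versus $\phi^k$ weight convention as the only point needing care.
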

\begin{proof}
We know that $\K^1(S^{2r-1}) = \Z(a)$ where $a$ has weight $r$. Using the fact that $G$ is rationally homotopy equivalent to a product of odd-dimensional spheres, the result now follows directly from the K\"unneth formula for $\K$-theory.
\end{proof}

This tells us how to compute the type of $G$ from the representation ring of $G$: we must compute the eigenvalues of the Adams operations on $V(G)$.

\subsection{$V(G)$ and the rational homotopy groups of $G$}
If $X$ is a compact Hausdorff space and $f : S^n \to X$ is a map then we have the induced map
\[ f^* : \K^*(X) \to \K^*(S^n). \]
This clearly defines a homomorphism
\[ d : \pi_n(X) \to \Hom_{\Psi}(\K^*(X), \K^*(S^n)), \]
which is the Adams $d$-invariant.

We now examine the $d$-invariant
\[
d : \pi_{2r-1}(G) \otimes \Q \to \Hom_{\Psi}(\K^1(G), \Q(r)).
\]
If $f:S^{2r-1} \to G$ is a map, then $f^* : \K^1(G) \to \K^1(S^{2r-1})$ is uniquely determined by its restriction to the generators $V(G) \subseteq \K^1(G)$.  So we abuse notation and regard $d(f)$ as an element
$$
d(f) \in \Hom_{\Psi}(V(G), \Q(r)). 
$$
Finally, we rename $\Hom_{\Psi}(V(G), \Q(r))$ as $V^*_r(G)\o\Q$ since it is the weight $r$ subspace of the $\Psi$-module $V^*(G)\o \Q$ dual to the $\Psi$-module $V(G)$.  

\begin{theorem} \label{thm:dmapisiso}
Let $G$ be a Lie group such that $\pi_1(G)$ is torsion free. The homomorphism
$$
d : \pi_{2r-1}(G) \otimes \Q \to V^*_r(G)\otimes\Q.
$$
is an isomorphism.
\end{theorem}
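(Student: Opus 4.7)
The plan is to reduce the proof to the case $G = S^{2r-1}$ via naturality of the $d$-invariant combined with Serre's rational equivalence, where the claim becomes essentially tautological.

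First, a dimension count. By Theorem~\ref{thm:isoswithtype}(ii), $\dim_\Q V^*_r(G) \otimes \Q$ equals the number $n_r$ of indices $i$ with $r_i = r$, and by Serre's theorem this also equals $\dim_\Q \pi_{2r-1}(G) \otimes \Q$. So both sides are finite dimensional of equal dimension, and it suffices to prove that $d$ is surjective (equivalently, injective).

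Since $G$ is an H-space, pick maps $\phi_i : S^{2r_i - 1} \to G$ whose classes form a basis of $\pi_*(G) \otimes \Q$, and assemble them into the H-space product map
\[
\Phi : \prod_{i=1}^l S^{2r_i - 1} \to G, \qquad \Phi(x_1, \ldots, x_l) = \phi_1(x_1) \cdots \phi_l(x_l).
\]
By construction $\Phi$ induces an isomorphism on rational homotopy groups, hence is a rational homotopy equivalence of nilpotent spaces of finite rational type. In particular $\Phi^* : \K^*(G) \otimes \Q \to \K^*(\prod_i S^{2r_i-1}) \otimes \Q$ is an isomorphism (via Chern character, or directly from Theorem~\ref{thm:isoswithtype}(i)). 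The $d$-invariant is natural in the space: since $f^* : \K^1(G) \to \K^1(S^{2r-1})$ automatically vanishes on products, $d$ factors through the $\Psi$-linear dual of the decomposable quotient of $\K^1(G) \otimes \Q$, which by Hodgkin's theorem coincides with $V^*(G) \otimes \Q$ on a Lie group and extends to a well defined rational homotopy invariant on any space. Naturality applied to $\Phi$ therefore reduces the theorem for $G$ to the theorem for $\prod_i S^{2r_i - 1}$, which by the K\"unneth formula splits further into the statement for a single sphere $S^{2r-1}$. For $S^{2r-1}$ both source and target are one dimensional over $\Q$, and the identity map has $d$-invariant equal to the identity of $\Q(r)$, which is nonzero; hence $d$ is an isomorphism.

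The main subtlety in this plan is that $\Phi$ is not a homomorphism of Lie groups, so the target $V^*_r$ of the $d$-invariant must be interpreted as a natural rational invariant of spaces. This is handled by identifying $V^*_r(X) \otimes \Q$ with the weight-$r$ part of $\Hom_\Psi$ from the indecomposable quotient of $\K^1(X) \otimes \Q$ into $\Q$: this extension agrees with the original definition for Lie groups, it is manifestly a rational homotopy invariant, and the $d$-invariant maps into it naturally because $\K^1(S^{2r-1})$ is already primitively generated.
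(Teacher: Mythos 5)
Your proposal is correct and follows the same route as the paper: reduce to the case of a single odd sphere, where the statement is immediate, using Serre's rational equivalence of $G$ with a product of odd-dimensional spheres. The paper's own proof is just a two-sentence sketch of this reduction; your version supplies the details it omits (the explicit H-space map $\Phi$, the reinterpretation of $V^*_r$ as a homotopy-invariant target so that naturality applies, and the dimension count).
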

\begin{proof}
If we replace $G$ by an odd sphere $S^{2r-1}$ this result is straightforward.  The full result follows because $G$ is rationally homotopy equivalent to a product of odd dimensional spheres.
\end{proof}
Suppose $h : H \to G$ is a homomorphism of Lie groups. Then Theorem \ref{thm:dmapisiso} gives a general method of computing the homomorphism $h_* : \pi_{2r-1}(H) \otimes \Q \to \pi_{2r-1}(G) \otimes \Q$ in terms of representation theory.  Explicitly, the following diagram commutes
$$
\begin{CD}
\pi_{2r-1}(H) \otimes \Q @>{h_*}>> \pi_{2r-1}(G) \otimes \Q \\
@V{d}VV @VV{d}V \\
V^*_r(H)\otimes\Q @>>{h_*}> V^*_r(G) \otimes \Q
\end{CD}
$$
where the horizontal arrows are the linear maps induced by $h$ and the vertical ones are the isomorphisms in Theorem \ref{thm:dmapisiso}.

\subsection{The rational homotopy groups of a homogeneous space}
Let $G$ be a Lie group.  It follows from Serre's theorem that $\pi_p(G)\o\Q = 0$ if $p$ is even.  Now suppose $K$ is a subgroup of $G$. It follows that the exact sequence in rational homotopy groups of the principal $K$-bundle $G \to G/K$ splits up into exact sequences
$$
0 \to \pi_{2r}(G/K)\o\Q\to \pi_{2r-1}(K)\o\Q \to \pi_{2r-1}(G)\o\Q \to \pi_{2r-1}(G/K)\o\Q \to 0
$$
for $r \geq 1$.  We refer to these exact sequences as the {\em four-term exact sequences}.  Using Theorem~\ref{thm:dmapisiso} we can rewrite these exact sequences as
$$
0 \to \pi_{2r}(G/K)\o \Q \to V^*(K)_r\o\Q \to V^*(G)_r \o \Q\to \pi_{2r-1}(G/K)\o\Q \to 0.
$$
This proves the following result.

\begin{theorem} \label{thm:psimodulehomsforinclusions}
Let $G$ be a Lie group with a subgroup $K$. Let $i : K \to G$ be the inclusion of $K$ and let
$$
i_* : V^*(K) \o \Q \; \to \; V^*(G)\o\Q
$$
be the homomorphism of $\Psi$-modules induced by $i$.  Then there are natural isomorphisms
$$
\pi_{2r}(G/K)\o \Q \; \longrightarrow \; \ker \, \Big(i^* : V^*(K)_{2r-1}\o\Q \to V^*(G)_{2r-1}\o\Q \Big),
$$
$$
\pi_{2r-1}(G/K)\o \Q \; \longrightarrow \; \coker \, \Big(i^* : V^*(K)_{2r-1}\o\Q \to V^*(G)_{2r-1}\o\Q \Big).
$$
\end{theorem}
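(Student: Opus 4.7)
The plan is to combine the long exact sequence of rational homotopy groups of the fibration $K \to G \to G/K$ with Serre's vanishing (recalled at the start of this section) and Theorem \ref{thm:dmapisiso}, exactly along the lines sketched in the paragraph preceding the theorem statement. The only non-formal item is naturality of the $d$-invariant with respect to the inclusion $i$.

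First I would apply Serre's theorem to $G$ and $K$, yielding $\pi_{2s}(G)\o\Q = \pi_{2s}(K)\o\Q = 0$ for every $s \geq 1$. Feeding this into the long exact sequence of the principal $K$-bundle breaks it into the four-term exact sequences
\[ 0 \to \pi_{2r}(G/K)\o\Q \to \pi_{2r-1}(K)\o\Q \xrightarrow{i_*} \pi_{2r-1}(G)\o\Q \to \pi_{2r-1}(G/K)\o\Q \to 0 \]
for $r \geq 1$. Next, Theorem \ref{thm:dmapisiso} identifies the two middle terms via the $d$-invariant: $\pi_{2r-1}(K)\o\Q \cong V^*(K)_r\o\Q$ and $\pi_{2r-1}(G)\o\Q \cong V^*(G)_r\o\Q$.

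The main thing to verify is that under these identifications the map $i_*$ on rational homotopy becomes the $\Psi$-module map $V^*(K)_r\o\Q \to V^*(G)_r\o\Q$ dual to the restriction $i^* : V(G) \to V(K)$. This naturality of $d$ is the only place where the argument could run into trouble, and it reduces to the observation that $d$ is built from the functorial assignment $\rho \mapsto \beta(\rho) \in \K^1(-)$ together with pullback along maps out of spheres; concretely, for $f : S^{2r-1} \to K$ and $\rho \in R(G)$ one has
\[ d(i \circ f)(\rho) = (i \circ f)^*\beta(\rho) = f^*\beta(i^*\rho) = d(f)(i^*\rho), \]
which is precisely the compatibility required. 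Once this is in hand, the four-term sequence becomes
\[ 0 \to \pi_{2r}(G/K)\o\Q \to V^*(K)_r\o\Q \xrightarrow{i_*} V^*(G)_r\o\Q \to \pi_{2r-1}(G/K)\o\Q \to 0, \]
and reading off the kernel and cokernel of the middle map yields the two asserted isomorphisms. The substantive content has already been packaged into Theorem \ref{thm:dmapisiso}, so nothing beyond the above exact-sequence bookkeeping is required.
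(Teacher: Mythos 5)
Your proposal is correct and follows essentially the same route as the paper: Serre's theorem splits the long exact sequence of the principal $K$-bundle into the four-term exact sequences, and Theorem \ref{thm:dmapisiso} identifies the middle terms, with the kernel and cokernel then read off directly. The only addition is your explicit verification of the naturality of the $d$-invariant, which the paper leaves implicit.
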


\section{The $\Psi$-module structure of $V(G)$} \label{sec:Psimodule}
In this section we study the $\Psi$-module $V(G)$ in detail and give two applications to the $K$-theory and rational homotopy groups of $G$.  We use the notation and terminology for $\Psi$-modules described in Section 2. We say that a $\Psi$-module is cyclic if it is generated, as a $\Psi$-module, by one element.

\subsection{A basic lemma}
\begin{lemma} \label{lem:cyclicpsimodules}
\begin{enumerate}
\item The $\Psi$-module $\Q(r_1, \dots, r_l)$ is a cyclic $\Psi$-module if and only if the $r_i$ are distinct.
\item Suppose $r_1, \dots, r_l$ are distinct. Then $u$ is a generator of the $\Psi$-module $\Q(r_1, \dots, r_l)$ if and only if $u = u_1 + \dots + u_l$ where $u_i$ is a non-zero element of $\Q(r_1, \dots, r_l)$ with weight $r_i$.
\end{enumerate}
\end{lemma}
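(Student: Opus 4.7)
The plan is to exploit the eigenspace decomposition of $M := \Q(r_1,\dots,r_l)$ under the Adams operations. Every $u \in M$ decomposes uniquely as $u = u_1 + \dots + u_l$ with $u_i \in \Q(r_i)$, and $\psi^k u = \sum_i k^{r_i} u_i$ for each $k \geq 1$. Since $\psi^1 = 1$ and $\psi^k \psi^l = \psi^{kl}$, the cyclic $\Psi$-submodule $\langle u \rangle$ generated by $u$ is simply the $\Q$-span of the family $\{\psi^k u : k \geq 1\}$.

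First I would address the two obstructions to cyclicity. If some $u_i = 0$, then every $\psi^k u$ lies in the proper sub-$\Psi$-module $\bigoplus_{j \neq i} \Q(r_j)$, hence so does $\langle u \rangle$, so $u$ does not generate $M$; this proves the ``only if'' direction of (ii). If instead $r_i = r_j$ for some $i \neq j$, then for every $k$ the projection of $\psi^k u$ onto $\Q(r_i) \oplus \Q(r_j)$ equals $k^{r_i}(u_i + u_j)$, which lies on the line through $u_i + u_j$. Hence the projection of $\langle u \rangle$ to $\Q(r_i) \oplus \Q(r_j)$ is at most one-dimensional while the target is two-dimensional, so $\langle u \rangle \neq M$; this rules out cyclicity for any choice of $u$, proving the ``only if'' direction of (i).

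For the positive direction, I would assume the $r_i$ are distinct and every $u_i \neq 0$, and fix any $k \geq 2$. Consider the vectors $u, \psi^k u, \dots, (\psi^k)^{l-1} u$. Using $(\psi^k)^j u = \sum_i (k^{r_i})^j u_i$ and expressing everything in the basis $(u_1, \dots, u_l)$, the coordinate matrix is the Vandermonde matrix with $(j,i)$-entry $(k^{r_i})^j$. Its determinant is nonzero because the scalars $k^{r_1}, \dots, k^{r_l}$ are distinct. Inverting yields each $u_i$ as a $\Q$-linear combination of the $(\psi^k)^j u$, so $u_1, \dots, u_l \in \langle u \rangle$ and hence $\langle u \rangle = M$. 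This simultaneously gives the ``if'' direction of (ii) and, by exhibiting for instance the sum of the canonical basis vectors of the $\Q(r_i)$ as a generator, the ``if'' direction of (i).

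The lemma is short and routine, with no serious obstacle; the one point requiring care is the clean separation of the two failure modes for cyclicity, namely a vanishing isotypic component of $u$ versus a repeated weight among the $r_i$, and the observation that the Vandermonde argument depends on both defects being absent.
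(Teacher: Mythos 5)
Your proof is correct and follows essentially the same route as the paper's: decompose into weight (eigen)spaces, show a repeated weight or a vanishing component forces the generated submodule into a proper projection, and use the cyclic-vector/Vandermonde argument for a single $\psi^k$ with $k\geq 2$ to get sufficiency. The only difference is cosmetic: you spell out the Vandermonde determinant that the paper leaves as ``straightforward to check,'' and you make the deduction of part (ii) explicit where the paper says it ``follows directly.''
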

\begin{proof} 
Let $r$ be one of the $r_i$ and let $W_r$ be the weight $r$ subspace of $\Q(r_1, \dots, r_l)$.  So the dimension of $W_r$ is the number of $r_i$ equal to $r$.  Let $p_r : \Q(r_1, \dots, r_l) \to W_r$ be the standard projection onto $W_r$.  Suppose $\Q(r_1, \dots, r_l)$ is a cyclic $\Psi$-module. Then, since $p_r$ is a map of $\Psi$-modules $W_r$ is also a cyclic $\Psi$-module.  Let $w$ be a non-zero element of $W_r$.  Then $\psi^kw = k^rw$ and so the $\Psi$-module generated by $w$ must be $1$-dimensional and the dimension of $W_r$ must be one.  It follows that the $r_1, \dots, r_l$ must be distinct.

Recall that for a linear map $A : V \to V$, a vector $v \in V$ is called a cyclic vector for $A$ if 
$$
v, A(v), \dots, A^{l-1}(v)
$$
is a basis for $V$. 
 
The eigenvalues of $\psi^k$ are $k^{r_1}, \dots, k^{r_l}$.  If $r_1, \dots, r_l$ are distinct
and $k\geq 2$, 
then clearly 
there exists a cyclic vector for $\psi^k$. If $u$ is such a cyclic vector, it follows that
$$
u, \ \psi^k(u), \ \psi^{k^2}(u), \ldots, \ \psi^{k^{l-1}}(u)
$$ 
are a basis for $\Q(r_1, \dots, r_l)$ and so $\Q(r_1, \dots, r_l)$ is generated as a $\Psi$-module by $u$.

This completes the proof of the first part of the theorem. The second part follows directly from the above characterisation of cyclic vectors.
\end{proof}

\subsection{Applications} \label{subsec:applications}

We assume that $G$ is a simply connected simple Lie group that is not isomorphic to $\Spin(2n)$. The groups $\Spin(2n)$ are dealt with in Section \ref{sec:spin2l}. 

\begin{theorem} \label{thm:simplesinglegenpsimod}
The $\Psi$-module $V(G)\o\Q$ is cyclic.
\end{theorem}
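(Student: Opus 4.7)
The plan is to reduce Theorem \ref{thm:simplesinglegenpsimod} to a statement about Serre's type of $G$ and then verify that statement by a short case-check.

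By Theorem \ref{thm:isoswithtype}(ii), there is an isomorphism of $\Psi$-modules
$$
V(G)\o\Q \iso \Q(r_1, \dots, r_l),
$$
where $(2r_1 - 1, \dots, 2r_l - 1)$ is the type of $G$. Lemma \ref{lem:cyclicpsimodules}(i) says the right hand side is a cyclic $\Psi$-module if and only if the weights $r_1, \dots, r_l$ are pairwise distinct, equivalently if and only if the type of $G$ consists of distinct integers. So the whole theorem reduces to the combinatorial claim: for $G$ a simply connected simple Lie group other than $\Spin(4n)$, the type of $G$ has no repeats.

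First I would dispose of the classical groups. For $A_l = \SU(l+1)$ the type is $(3, 5, 7, \dots, 2l+1)$, and for $B_l$ and $C_l$ the type is $(3, 7, 11, \dots, 4l - 1)$; in all three cases the type is a strictly increasing arithmetic progression. The group $D_l = \Spin(2l)$ is the interesting case: its type is $(3, 7, \dots, 4l - 5,\, 2l - 1)$, where the first $l - 1$ entries form the progression $\{4i - 1 : 1 \le i \le l - 1\}$ and the last entry is $2l - 1$. This extra entry appears among the previous ones precisely when $2l - 1 \equiv 3 \pmod 4$, i.e.\ when $l$ is even; writing $l = 2n$ it coincides with $4n - 1$. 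Hence the type of $D_l$ has a repeat exactly when $G = \Spin(4n)$, which is the case excluded by the hypothesis.

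Then I would verify the exceptional cases by direct inspection of the well-known types: $G_2$ has $(3, 11)$; $F_4$ has $(3, 11, 15, 23)$; $E_6$ has $(3, 9, 11, 15, 17, 23)$; $E_7$ has $(3, 11, 15, 19, 23, 27, 35)$; $E_8$ has $(3, 15, 23, 27, 35, 39, 47, 59)$. In each case the entries are pairwise distinct. There is essentially no obstacle in this proof; the only conceptual point is the explanation of the exclusion of $\Spin(4n)$, which arises because the Pfaffian/Euler generator sitting in degree $2l-1$ duplicates one of the Pontryagin-type generators exactly when $l$ is even.
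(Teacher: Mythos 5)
Your proof is correct and follows essentially the same route as the paper: reduce via Theorem \ref{thm:isoswithtype} and Lemma \ref{lem:cyclicpsimodules} to the claim that the type of $G$ has distinct entries, and observe that repeats occur only for $\Spin(4n)$. The paper simply asserts this last fact without the case-by-case verification, which you supply correctly.
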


\begin{proof}
Theorem~\ref{thm:isoswithtype} shows that the $\Psi$-module $V(G) \o \Q$ is $\Q(r_1, \dots, r_n)$ where the type of $G$ is $(2r_1-1, \dots, 2r_n-1)$. These integers are distinct (since we have excluded, in particular, the groups $\Spin(4k)$). The result now follows from Lemma \ref{lem:cyclicpsimodules}. 
\end{proof}

\begin{corollary} \label{cor:Z2psi}
The $\Z/2$-graded $\Psi$-ring $\K^*(G) \o\Q$ is generated by one element.
\end{corollary}

\begin{proof}
This follows immediately from Theorem \ref{thm:simplesinglegenpsimod}.
\end{proof}

\begin{corollary} \label{cor:V(H)}
Let $H$ be a Lie group and suppose $f, g : H \to G$ are homomorphisms of Lie groups.  Let $u \in R(G)$ be a representation whose image generates $V(G)\o \Q$ as a $\Psi$-module $V(G)\o\Q$. Then the two homomorphisms
$$
f_*, g_* : \pi_*(H)\o \Q \to \pi_*(G)\o \Q
$$
are equal if and only if 
$$
f^*(u) = g^*(u) \in V(H) \o \Q.
$$
\end{corollary}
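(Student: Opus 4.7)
The plan is to chain two natural reductions: first translate the condition on rational homotopy groups into a condition on the $\Psi$-module $V$, and then use cyclicity to reduce this to a single equation.

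First I would apply Theorem~\ref{thm:dmapisiso} to both $H$ and $G$ in every odd degree $2r-1$. Combined with the naturality square displayed immediately after that theorem, this translates the condition $f_* = g_*$ on $\pi_*(H)\o\Q \to \pi_*(G)\o\Q$ into the statement that the induced maps
$$
f_*, g_* : V^*(H)\o\Q \to V^*(G)\o\Q
$$
agree as $\Psi$-module homomorphisms. These are, by the very definition of the induced map on $\Hom_{\Psi}(V(\,\cdot\,),\Q(r))$, the $\Q$-linear transposes of the pullback maps $f^*, g^* : V(G)\o\Q \to V(H)\o\Q$ coming from restriction of representations along $f$ and $g$ (both of which are $\Psi$-module homomorphisms, since representation-theoretic pullback commutes with Adams operations). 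Duality for finite-dimensional $\Q$-vector spaces then gives $f_* = g_*$ on $V^*\o\Q$ if and only if $f^* = g^*$ on $V\o\Q$.

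Second, by hypothesis $u$ generates $V(G)\o\Q$ as a $\Psi$-module, so $V(G)\o\Q$ is the $\Q$-linear span of the elements $\psi^k(u)$, $k \geq 1$. Because $f^*$ and $g^*$ commute with every $\psi^k$, knowing $f^*(u) = g^*(u)$ in $V(H)\o\Q$ forces $f^*(\psi^k(u)) = \psi^k(f^*(u)) = \psi^k(g^*(u)) = g^*(\psi^k(u))$ for all $k$, and by $\Q$-linearity $f^* = g^*$ on all of $V(G)\o\Q$. The converse is trivial: evaluation at $u$ is a particular case of equality of the two maps. Combining the two reductions yields the corollary.

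I do not expect a genuine obstacle: every step is formal once Theorem~\ref{thm:dmapisiso} and the cyclic generation guaranteed by Theorem~\ref{thm:simplesinglegenpsimod} are in hand. The only point that deserves a moment's care is the duality step; one must confirm that the naturality square following Theorem~\ref{thm:dmapisiso} is genuinely a diagram of $\Psi$-modules and that the vertical $d$-invariants are $\Psi$-module isomorphisms, so that passing from $\pi_*\o\Q$ to $V^*\o\Q$ and then dualising to $V\o\Q$ loses no information compared with the original rational homotopy condition.
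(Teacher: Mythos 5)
Your proof is correct and follows essentially the same route as the paper: cyclicity of the $\Psi$-module $V(G)\o\Q$ reduces the condition to equality of $f^*$ and $g^*$ on all of $V(G)\o\Q$, and Theorem~\ref{thm:dmapisiso} (with its naturality square) converts that into equality of $f_*$ and $g_*$ on rational homotopy. You merely run the two reductions in the opposite order and spell out the duality and the commutation of $f^*,g^*$ with the Adams operations, which the paper leaves implicit.
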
 

\begin{proof}
Since $V(G)\o\Q$ is generated as a $\Psi$-module by the element defined by $u$ it follows that if $f^*(u) = g^*(u) \in V(G) \o \Q$ if and only if the homomorphisms $f^*, g^* : V(G)\o \Q \to V(H)\o \Q$ are equal. The result now follows directly from Theorem \ref{thm:dmapisiso}. \end{proof}

The following result gives an explicit, `convenient' choice for a representation $u$ whose image in $V(G) \otimes \Q$ is a $\Psi$-module generator.  

\begin{theorem} \label{thm:minmodgen}
The following representations $u$ define generators of the $\Psi$-module $V(G)\o\Q$. 
\begin{enumerate}
\item $G = \SU(n)$ and $u$ is the representation afforded by the usual action of $G$ on $\C^{n}$. 
\item $G = \Sp(n)$ and $u$ is the representation afforded by the usual action of $G$ on $\H^n = \C^{n} \oplus \C^{n}$. 
\item $G = \Spin(2n+1)$ and $u$ is the representation afforded by the usual action of $G$ on $\C^{2n+1}$.
\item $G=G_2$ and $u$ is the $7$-dimensional fundamental representation. 
\item $G=F_4$ and $u$ is the $26$-dimensional fundamental representation. 
\item $G=E_6$ and $u$ is one of the two $27$-dimensional fundamental representations. 
\item $G=E_7$ and $u$ is the $56$-dimensional fundamental representation of $G$. 
\item $G=E_8$ and $u$ is the $248$-dimensional adjoint representation of $G$. 
\end{enumerate}
\end{theorem}

The proof of this theorem is postponed to Section \ref{sec:reptheorycomps}.

\subsection{The groups $\Spin(2n)$} \label{sec:spin2l}

We think the most useful result for these groups is the following theorem. Once more, the proof is postponed to Section \ref{sec:reptheorycomps}.

\begin{theorem} \label{thm:minmodgenSpineven}
The elements of $V(\Spin(2n))\o\Q$ defined by the vector representation and one of the spin representations, say $\Delta^+$, of dimension $2^{n-1}$, generate the $\Psi$-module $V(\Spin(2n))\o\Q$. 
\end{theorem}

The groups $\Spin(4k)$ have a repeated integer in their type and so $V(\Spin(4k))\o\Q$ is not cyclic. Therefore, the result is best possible in this case. 

The $\Psi$-module $V(\Spin(4k+2))\o\Q$ is cyclic but there does not seem to be any particularly `convenient' choice of a generator. So it might (or might not) be best to work with a `convenient' choice of two generators.

We conclude this section with the analogues of Corollaries \ref{cor:Z2psi} and \ref{cor:V(H)} for $\Spin(2n)$. The proofs are almost identical and left for the reader. 

\begin{corollary}
The $\Z/2$-graded $\Psi$-ring $\K^*(\Spin(2n)) \o\Q$ is generated by two elements.
\end{corollary}

\begin{corollary} \label{cor:V(H)spin}
Let $H$ be a Lie group and suppose $f, g : H \to \Spin(2n)$ are homomorphisms of Lie groups.  Let $u, v$ be representations whose images generate $V(\Spin(2n))\o \Q$ as a $\Psi$-module $V(\Spin(2n))\o\Q$. Then the two homomorphisms
$$
f_*, g_* : \pi_*(H)\o \Q \to \pi_*(\Spin(2n))\o \Q
$$
are equal if and only if 
$$
f^*(u) = g^*(u) \text{ and } f^*(v) = g^*(v) \in V(H) \o \Q. 
$$
\end{corollary}

\section{On the rational cohomology of $BG$ and $G/K$} \label{sec:ratcoho}
We now use the results of the previous section to throw some light on the rational cohomology of $BG$ and the homogeneous spaces $G/K$ where $G$ and $K$ have the same rank. Throughout this section $G$ is a simply connected simple Lie group of type
$$
(2r_1-1, \ \ldots, \ 2r_n -1), 
$$
where $n$ is the rank of $G$. We start by assuming the integers $r_1, \ldots, r_n$ are distinct, thus excluding the groups $\Spin(2n)$. These are dealt with in Section \ref{sec:spin2nagain}. 

\subsection{On the rational cohomology of $BG$.}
 It is well known, see \cite[Th\'eor\`eme~19.1]{MR0051508}, that \[ H^*(BG; \Q) = \Q [a_{2j} : j = r_1, \dots, r_n], \]
where $a_{2j} \in H^{2j}(BG; \Q)$.  Of course, these generators are not unique but we only need one generator in each of the dimensions $2j$ for $j = r_1, \dots, r_n$.

From now on, if $\rho$ is a representation of $G$ we will write $\ch_k(\rho) \in H^{2k}(BG; \Q)$ for the $k$-th component of the Chern character of the vector bundle over $BG$ associated to $\rho$.  We extend this convention to any characteristic class and indeed to the associated bundle over $X/G$, where $G$ acts freely on a compact Hausdorff space $X$.

We now ask what might constitute a good choice of generators. Our basic strategy is to use representation theory to do computations. So we are led to the following question.  Is there a representation $u$ of $G$ such that the elements $\ch_k(u)$ where $k \geq 1$ generate the ring $H^*(BG; \Q)$?  

\begin{theorem} \label{thm:chernofminisenough}
If $u \in R(G)$, then $H^*(BG; \Q)$ is generated as a $\Q$-algebra by
$$
\ch_{r_1}(u), \ \dots , \ \ch_{r_n}(u)
$$
if and only if the element of $V(G)\o\Q$ defined by $u$ generates $V(G)\o\Q$ as a $\Psi$-module.
\end{theorem}

\begin{proof}
If $X$ is a CW complex of finite type and $x \in \K^0(X)$ then 
$$
\ch_q(\psi^kx) = k^q\ch_q(x).
$$
So we make $H^{\ev}(X ; \Q)$ into a $\Psi$-ring by defining Adams operations as follows.  For $y \in H^{2q}(X; \Q)$ define $\psi^k(y)$ by
$$
\psi^k(y) = k^qy.
$$
It follows that 
$$
\ch : \K^0(X) \o \Q \to H^{\ev}(X; \Q)
$$
is an isomorphism of $\Psi$-rings.

We use the ad hoc notation $W(G)$ for the indecomposable quotient of the $\Psi$-ring $H^{\ev}(BG; \Q)$.  It is isomorphic to the $\Psi$-module $\Q(r_1, \dots, r_n)$.  We get a basis for this $\Psi$-module by choosing homogeneous generators $a_{r_1},  \dots, a_{r_n}$ for the graded ring $H^*(BG; \Q)$.  We retain the notation 
$$
\ch : V(G) \o\Q \to W(G)
$$
for the homomorphism of $\Psi$-modules defined by the Chern character.  This homomorphism is an isomorphism of $\Psi$-modules, since $\ch : \K^0(BG) \o \Q \to H^{\ev}(BG; \Q)$ is an isomorphism of $\Psi$-rings. Lemma \ref{lem:cyclicpsimodules} completes the proof.
\end{proof}



\subsection{On the rational cohomology of homogeneous spaces}
Let $K$ be a subgroup of $G$ and $i:K \to G$ be the inclusion map. We let $\pi = Bi : BK \to BG$ be the induced map of classifying spaces defined by $i$. There is the usual fibration
\[
\begin{CD}
G/K @>j>> BK @>{\pi}>> BG
\end{CD}
\]

\begin{theorem} \label{thm:BorelandPittie}
Suppose the rank of $K$ is equal to the rank of $G$ and $u \in R(G)$ is a representation whose image generates the $\Psi$-module $V(G)\o\Q$.  Then the rational cohomology of $G/K$ is given by 
$$
H^*(G/K; \Q) = H^*(BK; \Q)/I
$$
where $I$ is the ideal in $H^*(BK; \Q)$ generated by the elements
$$
i^*(\ch_q(u)) = \ch_q(i^*(u)), \mathrm{\ with \ } q \geq 1.
$$
\end{theorem}

\begin{proof}
Consider the following commutative diagram.
$$
\begin{CD}
R(K) \otimes\Q @>>> \K^*(G/K) \otimes \Q @>{\ch}>> H^*(G/K; \Q) \\
@AAA @A{j^*}AA @AAj^*A \\
R(K) \otimes\Q @>>> \K^*(BK; \Q) @>>{\ch}> H^*(BK; \Q)
\end{CD}
$$
By Theorem \ref{thm:Pittiesthm}, $R(K) \otimes \Q \to \K^*(G/K) \otimes \Q$ is surjective. Since the Chern character is a rational isomorphism it follows that the composite of the two homomorphisms in the top row is surjective. The commutativity of the diagram shows that $j^*$ is surjective. A standard application of the Leray--Hirsch theorem as in \cite[Section~26]{MR0051508} shows that the kernel of $j^*$ is the ideal in $H^*(BK; \Q)$ generated by the elements
$$
\pi^*(x), \text{ with } x \in H^q(BG) \mathrm{\ and \ }  q \geq 1.
$$
Applying Theorem \ref{thm:chernofminisenough} completes the proof. 
\end{proof}

\subsection{The groups $\Spin(2n)$} \label{sec:spin2nagain}

Now we give the analogous results for $G = \Spin(2n)$. The proofs are almost identical and left to the reader.  

\begin{theorem} \label{thm:chernofminisenoughSpin}
Let $u, v \in R(G)$ be representations whose images generate the $\Psi$-module $V(G)\o\Q$. The cohomology algebra $H^*(BG; \Q)$ is generated as a $\Q$-algebra by
\[ \ch_{r_1}(u), \ldots, \ch_{r_n}(u),  \ch_{r_1}(v), \ldots, \ch_{r_n}(v).\]
\end{theorem}

\begin{theorem} \label{thm:BorelandPittieSpin}
Suppose the rank of $K$ is also $n$ and that $u,v \in R(G)$ are representations whose images generate the $\Psi$-module $V(G)\o\Q$. Then the rational cohomology of $G/K$ is given by 
\[
H^*(G/K; \Q) = H^*(BK; \Q)/I
\]
where $I$ is the ideal in $H^*(BK; \Q)$ generated by the elements
\[
\ch_q(i^*(u)), \mathrm{\ and \ } \ch_q(i^*(v)), \mathrm{\ with \ } q \geq 1. \]
\end{theorem}

\subsection{An example}
The Grassmann manifold $G_k(\C^n)$ is the homogeneous space
$$
\U(n)/(\U(k) \times \U(n-k)).
$$
We assume that $k \leq n-k$. 

Write $V_m$ for the universal vector bundle over $B\U(m)$.  This is the bundle defined by the usual action of $\U(m)$ on $\C^m$. There are two obvious bundles $V_k$ and $V_{n-k}$ on $B(\U(k) \times \U(n-k))$ and $H^*(B(\U(k) \times \U(n-k)); \Q)$ is the polynomial algebra generated by
$$
a_2 = \ch_1(V_k), \dots, a_{2k} = \ch_k(V_k),
$$$$
b_2 = \ch_1(V_{n-k}), \dots, b_{2(n-k)} = \ch_{n-k}(V_{n-k}).
$$
The restriction of $V_n$ to $B(\U(k) \times \U(n-k))$ is $V_k \oplus V_{n-k}$.  So the relations given by Theorem \ref{thm:BorelandPittie}
are simply
$$
\ch_i(V_k) + \ch_i(V_{n-k}) = 0 \quad \text{for $1 \leq i \leq n$.}
$$
This gives a presentation of $H^*(G_k(\C^n); \Q)$ with $n$ generators and $n$ relations.

We should decode these relations.   The first $k$ relations simply say that
$$
a_{2i} + b_{2i} = 0 \quad \text{for $1 \leq i \leq k$.}
$$
Now recall that $\ch_{k+1}(V_k)$ is not necessarily zero.  But it is a polynomial in $\ch_1(V_k), \dots , \ch_{k}(V_k)$. So $\ch_{k+1}(V_k) + \ch_{k+1}(V_{n-k}) = 0$ tells us that $b_{2k+2}$ is a polynomial in $a_2, \dots , a_{2k}$.  This carries on to show that $b_{2k+2}, \dots , b_{2(n-k)}$ are also polynomials in $a_2, \dots, a_{2k}$.  These polynomials are given by the Newton identities for symmetric polynomials.  For $i = n-k+1, \dots, n$ the equations
$$
\ch_i(V_k) + \ch_i(V_{n-k}) = 0
$$
tell us that a particular polynomial in $a_2, \dots, a_{2k}$ is equal to another particular polynomial in $b_2, \dots, b_{2(n-k)}$.  Now when we substitute the previous formulas for the $b_{2i}$ in terms of the $a_{2j}$ this shows that two polynomials in the $a_{2i}$ are equal.  This gives $k$ relations in the $a_{2i}$. We end up with a presentation with $k$ generators and $k$ relations.  

This calculation does not use the Schubert cell decomposition of the Grassmann manifold.

\section{Rationally elliptic spaces.} \label{sec:ratelliptic}
A simply connected topological space is {\em rationally elliptic} if both $H^*(X;\Q)$ and $\pi_*(X) \o \Q$ are finite dimensional. Any compact simply connected homogeneous space is rationally elliptic.  The space $S^3 \vee S^3$ is a simple example of a space that is not rationally elliptic: its rational cohomology is finite dimensional but its rational homotopy is not.  The space $\CP^{\infty}$ is not rationally elliptic: its rational homotopy is finite dimensional but its rational cohomology is not.   If $X$ is rationally elliptic then we write
$$
\chi_H(X), \quad \chi_{\pi}(X)
$$
for the Euler characteristic of $H^*(X;\Q)$ and $\pi_*(X) \o \Q$, respectively.

A very thorough account of the theory of rationally elliptic spaces can be found in \cite[Section~32]{felix2001rational} together with references to the original papers. The following theorem is a combination of Propositions 32.10 and 32.16 in \cite{felix2001rational}. 

\begin{theorem} \label{thm:ellipticprops} Suppose that $X$ is a rationally elliptic space. Then
$$
\chi_H(X) \geq 0, \quad \chi_{\pi}(X) \leq 0.
$$
Furthermore, the following statements are equivalent:
\begin{enumerate}
\item
$\chi_H(X) > 0$,
\item
$H^*(X; \Q)$ is concentrated in even degrees,
\item
$H^*(X; \Q)$ is the quotient of a polynomial ring $\Q[a_1, \dots, a_q]$ where the $a_i$ have even degree, by an ideal generated by a regular sequence of length $q$,
\item
$ \dim \pi_{\text{even}}(X) \o \Q = \dim \pi_{\text{odd}}(X) \o \Q$, which is equivalent to $\chi_\pi(X) = 0$.
\end{enumerate}
If these conditions hold then the Poincar\'{e} polynomial of $X$ is
\[P_X(t) = \frac{\prod\limits_{i = 1}^q (1-t^{2m_i})}{\prod\limits_{i = 1}^q (1-t^{2n_i})},\] and the Euler characteristic of $X$ is \[ \chi_H(X) = \frac{\prod\limits_{i = 1}^q m_i }{\prod\limits_{i = 1}^q n_i}, \]
where $2n_1, \ldots, 2n_q$ and $2m_1-1, \ldots, 2m_q-1$ are the degrees of a basis of $\pi_*(X) \otimes \Q$. 

\end{theorem}
Therefore, the minimum number of generators, the integer $q$ in (iii), is equal to $\dim \pi_{\text{even}}(X) \o \Q$.

\section{The rational homotopy groups of the Rosenfeld projective planes} \label{sec:rathomRosenfeld}

The rational homotopy groups of the Rosenfeld projective planes were first calculated by Svjetlana Terzi\'{c} in \cite{MR2906538}. We give a proof following the main strategy of this paper, using Theorem \ref{thm:psimodulehomsforinclusions} and a computation in representation theory.

\begin{theorem} \label{thm:rathomgrpsRosenfeld}
The non-zero rational homotopy groups of $R_5$, $R_6$ and $R_7$ are given by
\begin{enumerate}
\item $\pi_m(R_5)\otimes\Q = \Q$, if $m = 2, 8, 17, 23$,
\item $\pi_m(R_6)\otimes\Q = \Q$, if $m = 4, 8, 12, 23, 27, 35$,
\item $\pi_m(R_7)\otimes\Q = \Q$, if $m = 8,12, 16, 20, 35, 39, 47, 59$.
\end{enumerate}
\end{theorem}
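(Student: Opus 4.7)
The plan is to apply Theorem \ref{thm:psimodulehomsforinclusions} in each of the three cases, reducing the computation of $\pi_*(G/K)\o\Q$ to a problem in linear algebra: for each integer $r\geq 1$,
\[
\pi_{2r}(G/K)\o\Q = \ker\bigl(V^*(K)_r\o\Q \to V^*(G)_r\o\Q\bigr), \qquad \pi_{2r-1}(G/K)\o\Q = \coker\bigl(V^*(K)_r\o\Q \to V^*(G)_r\o\Q\bigr),
\]
where the arrow is induced by $i\colon K \to G$ on the weight-$r$ component of the dual $\Psi$-modules. To organise this I would first tabulate, via Theorem \ref{thm:isoswithtype}, the weights appearing in $V(G)\o\Q$ and $V(K)\o\Q$ for each pair, reading them off from the exponents of the Lie algebras. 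For $E_6$, $E_7$, $E_8$ the weights are $\{2,5,6,8,9,12\}$, $\{2,6,8,10,12,14,18\}$ and $\{2,8,12,14,18,20,24,30\}$. Since rational types depend only on the Lie algebras, the stabilisers $\Spin(10)\times_{C_4}\U(1)$, $\Spin(12)\times_{C_2}\Sp(1)$ and $\Spin(16)/C_2$ have the same weight multisets as $\Spin(10)\times \U(1)$, $\Spin(12)\times \Sp(1)$ and $\Spin(16)$, namely $\{1,2,4,5,6,8\}$, $\{2,2,4,6,6,8,10\}$ and $\{2,4,6,8,8,10,12,14\}$ (note the repetitions at the non-standard exponents of $D_6$ and $D_8$). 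Direct inspection of these three pairs of multisets shows that at each weight $r$ exactly one of the following holds: (a) $V(G)_r\o\Q = 0$, so the whole source contributes to the kernel and the cokernel vanishes; (b) $V(K)_r\o\Q = 0$, so the whole target contributes to the cokernel and the kernel vanishes; or (c) both sides are nonzero, in which case the rank of the map must be computed.

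For the case (c) weights, the nontrivial work is to verify that in each such weight the map has the rank predicted by the statement: in every such weight it must be either an isomorphism or a surjection with $1$-dimensional kernel. I would do this by picking a minimal representation $u$ of $G$, which by Theorem \ref{thm:minmodgen} defines a generator of $V(G)\o\Q$ as a $\Psi$-module, and computing the branching of $u$ to $K$ explicitly. Via the Chern-character isomorphism of $\Psi$-modules, the weight-$r$ component of the class of $u$ in $V(G)\o\Q$ corresponds to $\ch_r(u) \in H^{2r}(BG;\Q)$; the image of this class in $V(K)\o\Q$ is then read off by expressing $u|_K$ as a sum of irreducibles of $K$ and computing their Chern characters modulo decomposables in each relevant weight. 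The resulting numerical data (non-vanishing rational numbers in each weight where an isomorphism is required) directly verify the rank assertions, and hence the dimensions in the statement.

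The main obstacle is the representation-theoretic step of producing these branching rules and checking the linear-independence assertions weight by weight. For $R_5$ and $R_6$ one can work with the $27$-dimensional minimal representation of $E_6$ and the $56$-dimensional minimal representation of $E_7$, which keeps things manageable; but for $R_7$ the only faithful irreducible of $E_8$ is its $248$-dimensional adjoint, which branches to $\Spin(16)$ as the adjoint $\frakso(16)$ of dimension $120$ plus a half-spin summand of dimension $128$. The most delicate weight is $r=8$, where $V(\Spin(16))_8\o\Q$ is $2$-dimensional (owing to the repeated exponent of $D_8$) while $V(E_8)_8\o\Q$ is $1$-dimensional; one must verify that the induced map has rank exactly one, giving the single generator of $\pi_{16}(R_7)\o\Q$. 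This is the computational core that Section \ref{sec:rathomRosenfeld} refers forward to Section 9, where it is carried out in \Magma; once completed, the ranks of all the induced maps can be read off and match the list of nonzero rational homotopy groups stated in the theorem.
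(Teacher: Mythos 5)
Your proposal is correct in outline and rests on the same reduction as the paper --- Theorem \ref{thm:psimodulehomsforinclusions} together with the types of the groups involved --- but it pins down the ranks of the restriction maps by a genuinely different computation. The paper computes a single global invariant for each pair: the total dimension of the kernel of $(h_n)_* : V^*(\Spin(2m))\o\Q \to V^*(G)\o\Q$ (Lemma \ref{lem:kernelsofhmaps}), obtained in Section \ref{sec:reptheorycomps} as the corank of the matrix of \emph{all} fundamental representations of $G$ restricted to $K$; it then observes that the weights where $V(G)_r\o\Q=0$ already force a kernel of exactly that dimension, so the map must have full rank everywhere else. (It also routes $R_5$ and $R_6$ through the intermediate spaces $N_5=E_6/\Spin(10)$ and $N_6=E_7/\Spin(12)$ and the circle and $S^3$ bundles, whereas you work directly with the full isotropy groups; this is harmless.) You instead verify the rank weight by weight, using that $\dim V(G)_r\o\Q=1$ for exceptional $G$, so the rank at weight $r$ is $1$ exactly when the weight-$r$ component of the branched minimal representation $i^*(u)$ is nonzero in $V(K)_r\o\Q$ --- equivalently, when $\ch_r(i^*u)$ is nonzero modulo decomposables in $H^{2r}(BK;\Q)$. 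This is sound, and it is in effect the computation the paper performs in Section \ref{sec:ratcohR567} for a different purpose; but be aware that it costs more than the paper's argument, since extracting weight components requires the $\Psi$-module (eigenvector) structure of $V(\Spin(2m))\o\Q$ or the corresponding Chern character expansions, precisely the ``intricate diversion'' the paper arranges to avoid. Note also that Section \ref{sec:reptheorycomps} does not carry out the computation you describe: it computes the rank of the matrix of restricted fundamental representations, not the weight components of a single branched representation, so your deferral of the numerical core to that section is a mismatch --- you would need to supply the Chern character (or eigenvector) computations yourself, most delicately at weight $8$ for $R_7$ where $V(\Spin(16))_8\o\Q$ is two-dimensional.
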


Recall that in \cite[Theorem~6.1]{MR1428422}, Adams shows how to construct injective homomorphisms 
\[
h_6 : \Spin(10) \to E_6, \quad h_7 : \Spin(12) \to E_7,
\] 
and a homomorphism
\[
\quad h_8 : \Spin(16) \to E_8,
\]
with kernel a central subgroup of order $2$ that is not contained in the kernel of the universal covering $\Spin(16) \to \SO(16)$.

\begin{lemma} \label{lem:kernelsofhmaps}
\begin{enumerate}
\item The kernel of $(h_6)_* : V^*(\Spin(10)) \otimes \Q \to V^*(E_6) \otimes \Q$ has dimension~$1$.
\item The kernel of $(h_7)_* : V^*(\Spin(12)) \otimes \Q \to V^*(E_7) \otimes \Q$ has dimension $2$.
\item The kernel of $(h_8)_* : V^*(\Spin(16)) \otimes \Q  \to V^*(E_8) \otimes \Q$ has dimension $4$.
\end{enumerate}
\end{lemma}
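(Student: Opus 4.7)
The plan is to work by duality and reduce the statement to a rank computation over $\Q$. For each $n \in \{10, 12, 16\}$ write $E$ for the corresponding exceptional group ($E_6, E_7, E_8$). Then $(h_n)_*$ is the $\Q$-linear dual of the $\Psi$-module homomorphism
$$
h_n^* : V(E) \otimes \Q \to V(\Spin(n)) \otimes \Q
$$
induced on indecomposables by restriction of representations. Since $\Spin(10), \Spin(12), \Spin(16)$ have ranks $5, 6, 8$, the claimed kernel dimensions $1, 2, 4$ are equivalent to the statement that $\rank h_n^* = 4$ in each of the three cases.

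For the upper bound I would compare weights. By Theorem~\ref{thm:isoswithtype} the groups $V(E) \otimes \Q$ and $V(\Spin(n)) \otimes \Q$ decompose into Adams eigenspaces with weights read off from the types:
\begin{align*}
& E_6 : \{2,5,6,8,9,12\},\ E_7 : \{2,6,8,10,12,14,18\},\ E_8 : \{2,8,12,14,18,20,24,30\}, \\
& \Spin(10) : \{2,4,5,6,8\},\ \Spin(12) : \{2,4,6,6,8,10\},\ \Spin(16) : \{2,4,6,8,8,10,12,14\}.
\end{align*}
Every weight of $V(E) \otimes \Q$ appears with multiplicity one, so $h_n^*$ splits as a direct sum, indexed by weights $r$, of maps from a $1$-dimensional source eigenspace to a (possibly zero- or higher-dimensional) target eigenspace. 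Each such summand map has rank $0$ or $1$, so $\rank h_n^*$ is bounded above by the cardinality of the intersection of the two underlying weight sets, namely $\{2,5,6,8\}$, $\{2,6,8,10\}$, $\{2,8,12,14\}$, each of size $4$.

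For the matching lower bound I would invoke cyclicity. Theorem~\ref{thm:minmodgen} asserts that $V(E) \otimes \Q$ is cyclic as a $\Psi$-module, generated by the class of a minimal representation $u$ of dimension $27$, $56$, $248$ respectively. Writing $[u] = \sum_r u_r$ for its weight decomposition, Lemma~\ref{lem:cyclicpsimodules}(ii) forces every $u_r$ to be nonzero. It therefore remains to verify that $h_n^*(u_r) \ne 0$ in $V(\Spin(n)) \otimes \Q$ for each of the four weights $r$ in the intersection. Concretely, I would compute $h_n^* u$ as a virtual $\Spin(n)$-representation via the classical branching rules
$$
u|_{\Spin(10)} = 16 + 10 + 1, \qquad u|_{\Spin(12)} = (32,1) + (12,2), \qquad u|_{\Spin(16)/C_2} = 120 + 128,
$$
reduce modulo $\Z \oplus I^2(\Spin(n))$, and extract the four relevant weight-$r$ components by applying $\psi^k$ for a few small values of $k$ and inverting the resulting Vandermonde system.

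The main obstacle is this final verification. For $h_6$ and $h_7$ it can, in principle, be carried out by hand: the Chern character identifies the weight-$r$ component of an element of $V(\Spin(n)) \otimes \Q$ with its image in $H^{2r}(B\Spin(n); \Q)$ modulo decomposables, the relevant indecomposables being the Pontryagin classes together with an Euler class. The case of $h_8$ is genuinely delicate: the weight-$8$ eigenspace of $V(\Spin(16)) \otimes \Q$ is $2$-dimensional (spanned by the classes of $p_4$ and the Euler class of $B\Spin(16)$), so there is a priori room for the class of $120 + 128$ to project trivially onto the $1$-dimensional subspace picked out by a minimal representation of $E_8$. It is precisely at this point that the systematic \Magma{} computation promised in Section~9 becomes essential, and the final numerical check of non-vanishing is completed there.
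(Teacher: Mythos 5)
Your proposal is correct, but it is organised quite differently from the paper's proof. The paper proceeds by brute force: in Section 9 it branches \emph{all} fundamental representations of $E_6$, $E_7$, $E_8$ to the spin subgroup, reduces modulo $\Z \oplus I^2$ to obtain the full matrix of $h_n^*$ in the fundamental-representation bases, and then computes the rank of that matrix directly (for $E_8$ this is an $8\times 8$ matrix with entries of size $\sim 10^{12}$, hence the reliance on \Magma). You instead split the problem in two: the upper bound $\rank h_n^* \le 4$ comes essentially for free, because $h_n^*$ commutes with Adams operations and the weight multisets read off from the types via Theorem \ref{thm:isoswithtype} intersect in exactly four elements, each occurring with multiplicity one on the source side; the lower bound is then reduced, via Theorem \ref{thm:minmodgen} and Lemma \ref{lem:cyclicpsimodules}(ii), to checking that the four relevant weight components of the single element $h_n^*[u]$ are nonzero. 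This is a lighter and more illuminating reduction — one branching rule per group instead of $l$ of them — and it \emph{explains} the answer: the kernel dimensions $1,2,4$ simply count the weights of $V(\Spin(n))\o\Q$ not shared with $V(E)\o\Q$. Two caveats. First, your route leans on Theorem \ref{thm:minmodgen}, which is itself established by a \Magma{} computation, so the overall computational burden is not eliminated, only relocated; the paper's direct matrix-rank argument is independent of that theorem. Second, the residual non-vanishing check is not literally ``completed in Section 9'' as you suggest — Section 9 computes the full matrices instead — but the data you need is effectively available in Section 8: under the Chern character, the weight-$r$ component of $h_n^*[u]$ is $\ch_r(h^*w)$ modulo decomposables in $H^{2r}(B\Spin(2n);\Q)$, and for $E_8$ the displayed expansions give, for instance, $\ch_2 \equiv 30\,p_1$ and $\ch_8 \equiv \tfrac{1}{28}p_4 + \tfrac{1}{2}e$ modulo decomposables, both visibly nonzero. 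Your observation that the weight-$8$ eigenspace of $V(\Spin(16))\o\Q$ is two-dimensional, so that non-vanishing there is not automatic, identifies exactly the one delicate point in this strategy.
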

The proof is computational and we postpone it until Section 9.

\subsection{The rational homotopy groups of $R_5$}
Recall that $R_5 = E_6/(\Spin(10)\times_{C_4}S^1)$.  Let $N_5$ be the homogeneous space $E_6/\Spin(10)$, so $N_5$ is an $S^1$-bundle over $R_5$.  

First we use the four-term exact sequences of Section 3.2 to compute the rational homotopy groups of $N_5$. The type of $\Spin(10)$ is
\[
(3, 7, 9, 11, 15)
\]
and the type of $E_6$ is
\[
(3, 9, 11, 15, 17, 23).
\]
The following table summarises the four term exact sequences for $N_5$. 
\vspace{\baselineskip}
\begin{center}
\begin{tabular}{ | c | c | c | c | c | c | c | c | }
\hline
$r$  & $3$ & $7$ & $9$ & $11$ & $15$ & $17$ & $23$  \\
\hline
$\pi_{r+1}(N_5)$ & & $\Q$ & &  & & &     \\
\hline
$\pi_{r}(\Spin(10))$ & $\Q$ & $\Q$ & $\Q$ & $\Q$ & $\Q$ & &   \\
\hline
$\pi_{r}(E_6)$ & $\Q$ & &$\Q$ &$\Q$ & $\Q$ &$\Q$ & $\Q$ \\
\hline
$\pi_{r}(N_5)$ &  &  &  &   & & $\Q$ & $\Q$  \\
\hline
\end{tabular}
\end{center}
\vspace{\baselineskip}
Our conventions are
\begin{enumerate}
\item homotopy groups are rational homotopy groups,
\item a blank entry in the table means the relevant group is $0$, as does not appearing in the table.
\end{enumerate}
The two middle rows are the homotopy groups of $\Spin(10)$ and $E_6$, so we must calculate the homomorphism
$$
\pi_r(\Spin(10)) \to \pi_r(E_6)
$$
for $r = 3,7,9,11,15$.  Using Theorem \ref{thm:psimodulehomsforinclusions} and Lemma \ref{lem:kernelsofhmaps} we know that the kernel of $\pi(\Spin(10)) \to \pi(E_6)$ is one dimensional, which must occur when $r=7$ since $\pi_7(E_6) = 0$. Thus, for $r = 9, 11, 15$ this homomorphism is injective and hence an isomorphism. This gives the complete table.

A simple argument with the homotopy exact sequence of the circle bundle $N_5 \to R_5$ completes the calculation of the rational homotopy groups of $R_5$.

\subsection{The rational homotopy groups of $R_6$}
Recall that $R_6 = E_7 /(\Spin(12) \times_{C_2} S^3)$.  Let $N_6$ be the homogeneous space $E_7/\Spin(12)$, so $N_6$ is an $S^3$-bundle over $R_6$. 

As before we begin by computing the homotopy groups of $N_6$ using the four-term exact sequences.  The type of $\Spin(12)$ is
\[
(3, 7, 11, 11, 15, 19)
\]
and the type of $E_7$ is 
\[
(3, 11, 15, 19, 23, 27, 35).
\]
The following table, with the same conventions as the previous case, summarises the four term exact sequences of $N_6$.
\vspace{\baselineskip}
\begin{center}
\begin{tabular}{ | c | c | c | c | c | c | c | c | c | }
\hline
$r$  & $3$ & $7$ & $11$ & $15$ & $19$ & $23$ & $27$ & $35$  \\
\hline
$\pi_{r+1}(N_6)$ & & $\Q$ & $\Q$ &  & & &  &     \\
\hline
$\pi_{r}(\Spin(12))$ & $\Q$ & $\Q$ & $\Q\oplus\Q$ & $\Q$ & $\Q$ & & &    \\
\hline
$\pi_{r}(E_7)$ & $\Q$ & &$\Q$ &$\Q$ &$\Q$ & $\Q$ &$\Q$ & $\Q$  \\
\hline
$\pi_{r}(N_6)$ &  &  &  &   & & $\Q$ & $\Q$ &$\Q$  \\
\hline
\end{tabular}
\end{center}

\vspace{\baselineskip}
This time we must calculate the homomorphism
$$
\pi_r(\Spin(12)) \to \pi_r(E_7).
$$
Theorem \ref{thm:psimodulehomsforinclusions} and Lemma \ref{lem:kernelsofhmaps} show that the kernel of $\pi(\Spin(12)) \to \pi(E_7)$ is two dimensional. Again, it is clear that it must have one dimensional kernel in degrees $r = 7, 11$. The table then follows. The homotopy exact sequence of the $S^3$ bundle $N_6 \to R_6$ completes the proof.

\subsection{The rational homotopy groups of $R_7$}
First we must explain what the half-spin or semi-spin group $\HSpin(2n)$ is.  The centre of $\Spin(4n)$ is $C_2 \times C_2$. So there are three non-trivial central subgroups of order two in $\Spin(4n)$.  If $4n \neq 8$ the outer automorphism group of $\Spin(4n)$ is cyclic of order two. The non-trivial outer automorphism interchanges two of these central subgroups, and this gives two different, but isomorphic, $C_2$-quotients of $\Spin(4n)$.  These are the half-spin groups.  It is usual to choose one to work with and call this $\HSpin(4n)$.  The third quotient is $\SO(4n)$.  In the case of $\Spin(8)$, the outer automorphism group is the symmetric group $\Sigma_3$ and this acts transitively on the three central subgroups of order $2$.  So all three $C_2$ quotients of $\Spin(8)$ are isomorphic to $\SO(8)$.

Now $R_7 = E_8/\HSpin(16)$, the type of $\HSpin(16)$ is
 $$
(3, 7, 11, 15, 15, 19, 23, 27),
$$
and the type of $E_8$ is
$$
(3, 15, 23, 27, 35, 39, 47, 59)
$$
The following table, with the usual conventions,  summarises the conclusions of the calculations with the four-term exact sequences for $R_7$. 
\vspace{\baselineskip}
\begin{center}
\begin{tabular}{ |c | c | c | c | c | c | c | c | c | c | c | c | }
\hline
$r$ & $3$ & $7$ & $11$ & $15$ & $19$ & $23$ & $27$ & $35$ & $39$ & $47$ & $59$ \\
\hline
$\pi_{r+1}(R_7)$ &  & $\Q$ & $\Q$ & $\Q$ &$\Q$  &  &  &  &  &  & \\
\hline
$\pi_{r}(\HSpin(16))$ &$\Q$ & $\Q$ & $\Q$ & $\Q\oplus \Q$ & $\Q$ & $\Q$ & $\Q$ & & & & \\
\hline
$\pi_{r}(E_8)$ & $\Q$ & & & $\Q$ & & $\Q$ & $\Q$ & $\Q$ & $\Q$ & $\Q$ & $\Q$ \\
\hline
$\pi_{r}(R_7)$ &  &  &  &  &  &  & & $\Q$ & $\Q$ & $\Q$ & $\Q$ \\
\hline
\end{tabular}
\end{center}
\vspace{\baselineskip}
This time we must compute 
$$
\pi_r(\HSpin(16)) \to \pi_r(E_8),
$$ 
which can be done with the same arguments as in the previous two cases.  

\subsection{What does the theory of elliptic spaces tell us?}
Applying the results in Section 6, we read off the following information about the rational cohomology of the Rosenfeld projective planes $R_5, R_6, R_7$.
\begin{enumerate}
\item\[ 
H^*(R_5, \Q) = \frac{\Q[a_2, a_8]}{(r_{18},r_{24})} \]
\[
P = \left(\frac{1-t^{18}}{1-t^2}\right)\left(\frac{1-t^{24}}{1-t^8}\right), \quad \chi = 27.
\]
\item
\[ H^*(R_6, \Q) = \frac{\Q[a_4, a_8, a_{12}]}{(r_{24}, r_{28}, r_{36})}\]
\[ P = \left(\frac{1-t^{24}}{1-t^{8}} \right)\left(\frac{1-t^{28}}{1-t^4}\right)\left(\frac{1-t^{36}}{1-t^{12}}\right), \quad \chi = 63. \]
\item \[ H^*(R_7, \Q) = \frac{\Q[a_8, a_{12}, a_{16}, a_{20}]}{(r_{36}, r_{40}, r_{48}, r_{60})} \] 
\[ P = \left(\frac{1-t^{36}}{1-t^{12}}\right)\left(\frac{1-t^{40}}{1-t^{8}}\right)\left(\frac{1-t^{48}}{1-t^{16}}\right)\left(\frac{1-t^{60}}{1-t^{20}}\right), \quad \chi = 135.\] 
\end{enumerate}

In all three cases the subscripts on the generators and the relations are the degrees.   Furthermore, in all three cases the relations  form a regular sequence.  

These results agree with \cite[Table~A]{MR3751970} which also contains references to other proofs.  It is worth repeating here that one of our main aims is to give a systematic account of these results.  In the next section we show how to get explicit presentations for the rings $H^*(R_5; \Q)$, $H^*(R_6; \Q)$ and $H^*(R_7; \Q)$ by using Theorem \ref{thm:BorelandPittie}.  

\section{Explicit presentations of the rational cohomology algebras of $R_5, R_6, R_7$} \label{sec:ratcohR567}

\subsection{The general strategy} \label{subsec:genstratcomps}

Let $G$ be a Lie group and let $K$ be a subgroup of $G$. Let $i : K \to G$ be the inclusion of $K$ in $G$. There is a fibration
$$
\begin{CD}
G/K @>j>> BK   @>{\pi = Bi}>>   BG
\end{CD} 
$$
Borel \cite[Section~26]{MR0051508} proves that if $K$ has the same rank as $G$, then:
\begin{enumerate} 
\item
$j^* : H^*(BK; \Q) \to H^*(G/K; \Q)$ is surjective,  
\item
$\pi^* : H^*(BG; \Q) \to H^*(BK; \Q)$ is injective,
\item
$H^*(G/K; \Q) = H^*(BG; \Q)/I$
where $I$ is the ideal in $H^*(BK; \Q)$ generated by
$$
\pi^*(x) \text{ with } x \in H^q(BK; \Q) \text{ and }  q \geq 1. 
$$
\end{enumerate}
The classical way of calculating $\pi^*$ is to use Weyl invariants. Choose a maximal torus 
\[
T  \subset K \subset G
\] 
and let $W_K \subset W_G$ be the Weyl groups of $K, G$. Then $W_K$ acts on $H^*(BT; \Q)$ and the invariants $H^*(BT; \Q)^{W_K}$ can be identified with $H^*(BK; \Q)$. Similarly, the invariants $H^*(BT; \Q)^{W_G}$ can be identified with $H^*(BG; \Q)$. 
\begin{enumerate}
\item The elements of $H^*(BT; \Q)^{W_K}$ give generators of $H^*(G/K; \Q)$.
\item The elements of $H^*(BT; \Q)^{W_G}$ give relations between these generators. 
\item Given $x \in H^*(BT; \Q)^{W_G}$, it corresponds to the relation among the generators given by expressing $x$ as a polynomial in the elements of $H^*(BT; \Q)^{W_K}$.
\end{enumerate}

This is a very effective way of doing the computations when $G$ and $K$ are classical groups, as the famous papers of Borel and Hirzebruch show \cite{MR0102800,MR0110105,MR0120664}. However, it gets more complicated for the exceptional Lie groups. For example, the order of $W_{E_8}$ is $696,729, 600$, just short of $700$ million, and calculating in 
$$
\Q[x_1, \ldots, x_8]^{W_{E_8}}
$$
is not likely to be straightforward. 

Our strategy is to use representation theory, $K$-theory, and Adams operations to give an alternative approach to calculating $H^*(G/K; \Q)$. Let $G$ be a simple, simply connected Lie group with the exception of $\Spin(2n)$. 

Theorem \ref{thm:chernofminisenough} gives a necessary and sufficient condition for a representation $u$ of $G$ to have the property that the cohomology ring $H^*(BG; \Q)$ is generated by the elements  
$$
\ch_q(u) \in H^{2q}(BG; \Q),  \text{ with } q \geq 1.
$$
Theorem \ref{thm:simplesinglegenpsimod} shows there exists such a representation and we give an explicit choice of one in Theorem \ref{thm:minmodgen}. Now, Theorem \ref{thm:BorelandPittie} then shows that the kernel of $j^*$ is the ideal of $H^*(BK; \Q)$ generated by
\[
\ch_q(i^*(u)) = i^*(\ch_q(u)),  \text{ with } q \geq 1. 
\]
Therefore, we must compute $\ch_q(i^*(u))$. When $K$ is one of the classical groups we are now in the realm of standard calculations with characteristic classes in algebraic topology, see for example \cite{MR1443417}. 

In the case where $G = \Spin(2n)$ we modify this method in the obvious way, using the results in Section \ref{sec:spin2l}. 

In each of our three examples there are Lie groups $N, G$ of the same rank and a homomorphism $h : N \to G$ with finite kernel, $C$.  The homogeneous space we want to study is $G/K$ where $K$ is the image of $h$. We use the notation $p : N \to N/C =K$ for the quotient homomorphism and $i: K \to G$ for the inclusion of $K$ in $G$. 

Now we form the commutative diagram of spaces

\[
\begin{CD}
@.BN @>Bh>> BG \\
@.@VBpVV @VV=V\\
G/K @>>j>BK @>>Bi> BG\\
\end{CD}
\]
where $j$ is the inclusion of the fibre of the fibration $BK \to BG$. Since the kernel of $p$ is finite 
$$
(Bp)^*: H^*(BK; \Q) \to H^*(BN; \Q)
$$
is an isomorphism.  This gives us a surjective ring homomorphism
$$
\phi = j^*\circ (Bp^*)^{-1}: H^*(BN; \Q) \to H^*(G/K; \Q).
$$
It follows from Borel's theorem that the kernel of $\phi$ is the ideal generated by
\[
Bh^*(x) \text{ with } x \in H^q(BG; \Q) \text{ and } q \geq 1.
\]

\subsection{The rational cohomology of $R_5$}

In \cite[Chapter~8, p.51]{MR1428422}, Adams shows that there is a homomorphism $h : \Spin(10) \times \U(1) \to E_6$ with kernel a central subgroup $C_4$. By definition
\[
R_5 = E_6/(\Spin(10) \times_{C_4} \U(1)),
\]
so, as in Section \ref{subsec:genstratcomps},  we get a surjective ring homomorphism
\[
\phi : H^*(\Spin(10) \times \U(1); \Q) \to H^*(R_5; \Q).
\]

Next, using \cite[Corollary~8.3]{MR1428422},  we define $u$ to be the 27-dimensional fundamental representation of $E_6$ such that
$$
h^*(u) = \xi^4 + v_{10} \otimes \xi^2 + \Delta_{10}^+ \otimes \xi^{-1}.
$$
Here $v_{10}$ is the $10$-dimensional vector representation of $\Spin(10)$, $\Delta_{10}^{\pm}$ are the two spin representations of $\Spin(10)$ and $\xi$ is the vector representation of $\U(1)$. 

It follows from Theorems \ref{thm:minmodgen} and \ref{thm:BorelandPittie} that $\phi$ defines an isomorphism
$$
H^*(B\Spin(10)\times U(1); \Q)/ I  \to H^*(R_5; \Q)
$$
where $I$ is the ideal in $H^*(B\Spin(10)\times U(1); \Q)$ generated by the elements
$$
 \ch_q(h^*(u)), \text{with $q \geq 1$}.
$$

Finally we compute 
$$
\ch_q(h^*(u)) \in H^{2q}(B(\Spin(10) \times \U(1)); \Q)
$$ 
in terms of the Pontryagin classes $p_i(v_{10}) \in H^{4i}(B\Spin(10); \Q)$, the Euler class  $e(v_{10}) \in H^{10}(B\Spin(10); \Q)$, and the first Chern class $c_1(\xi)$ in $H^2(B\U(1); \Q)$. This leads to the following theorem.

\begin{theorem} \label{thm:presR5}
The cohomology ring $H^*(R_5; \Q)$ is isomorphic to \[ \frac{\Q[a_{2}, a_{8}]}{(r_{18},r_{24})}, \] 
where the generators are 
\[
a_2 = \phi(c_1(\xi)), \quad a_8 = \phi(p_2(v_{10})),
\]  and the relations are 
\begin{align*}
r_{18} & = -39936 a_2^9 + 1728 a_2^5 a_8 + a_2 a_8^2, \\
r_{24} & = -50429952 a_2^{12} + 3068928 a_2^8 a_8 - 11808 a_2^4 a_8^2 - a_8^3.
\end{align*}
\end{theorem}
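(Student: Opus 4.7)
The plan is to apply Theorem~\ref{thm:BorelandPittie} to the triple $(G, \bar{K}, w) = (E_6,\, \Spin(10)\times_{C_4}\U(1),\, w)$, where $w$ is a minimal 27-dimensional representation of $E_6$, and then to unwind the resulting quotient into explicit polynomial identities. Following the general strategy of Section~8.1, the kernel of the covering $\tilde{K} := \Spin(10)\times \U(1) \to \bar{K}$ is finite, hence $H^{*}(B\bar{K}; \Q) \cong H^{*}(B\tilde{K}; \Q)$, and the latter is the polynomial ring
\[
\Q[c_1, p_1, p_2, p_3, p_4, e],
\]
where $c_1 = c_1(\xi)$, $p_i = p_i(\rho_{10})$ and $e = e(\rho_{10})$ have degrees $2, 4, 8, 12, 16, 10$ respectively. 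Theorem~\ref{thm:BorelandPittie} then identifies $H^{*}(R_5; \Q)$ with the quotient by the ideal $I$ generated by $\ch_q(h^{*}(w))$ for $q \geq 1$.

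The next step is to write $\ch(h^{*}(w))$ in closed form via the splitting principle. Let $t = c_1(\xi)$, and introduce formal roots $\pm y_1, \dots, \pm y_5$ for $\rho_{10}$, so that $p_i$ is the $i$-th elementary symmetric polynomial in $y_1^2, \dots, y_5^2$ and $e = y_1 y_2 y_3 y_4 y_5$. Then
\[
\ch(\xi^n) = e^{nt}, \qquad \ch(\rho_{10}) = \sum_{i=1}^{5}\bigl(e^{y_i} + e^{-y_i}\bigr), \qquad \ch(\delta_{10}^-) = \sum_{\prod_i \epsilon_i = -1} e^{(\epsilon_1 y_1 + \cdots + \epsilon_5 y_5)/2}.
\]
Multiplicativity of the Chern character gives
\[
\ch(h^{*}(w)) = \ch(\xi^4) + \ch(\rho_{10})\,\ch(\xi^2) + \ch(\delta_{10}^-)\,\ch(\xi),
\]
and each homogeneous component is a symmetric polynomial in $y_1^2, \dots, y_5^2$ (possibly multiplied by $y_1\cdots y_5$), converted to a polynomial in $c_1, p_1, \dots, p_4, e$ via Newton's identities.

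The final step is algebraic elimination inside $I$. The relations in low degree are used to successively express the generators other than $c_1$ and $p_2$ in terms of these two: the degree-$4$ relation $\ch_2 \in I$ solves for $p_1$ as a scalar multiple of $c_1^2$; the degree-$10$ relation $\ch_5$ solves for $e$ in terms of $c_1$ and $p_2$; $\ch_6$ in degree $12$ solves for $p_3$; and $\ch_8$ in degree $16$ solves for $p_4$. The intermediate components $\ch_1, \ch_3, \ch_4, \ch_7$ become identities after these substitutions. Writing $a_2 = \phi(c_1)$ and $a_8 = \phi(p_2)$, the first two nontrivial surviving relations in $\Q[a_2, a_8]$ arise in degrees $18$ and $24$, coming from $\ch_9$ and $\ch_{12}$, and these are the polynomials $r_{18}$ and $r_{24}$ of the statement; every higher component $\ch_q(h^{*}(w))$ then reduces to an algebraic consequence of $r_{18}$ and $r_{24}$ modulo the previous substitutions.

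The main obstacle is managing the symbolic computation: the intermediate expressions carry large rational coefficients, and the successive substitutions are most efficiently handled by a computer algebra system in the spirit of Section~9. As an external consistency check, $\Q[a_2, a_8]/(r_{18}, r_{24})$ has Poincar\'e series $(1-t^{18})(1-t^{24})/\bigl((1-t^2)(1-t^8)\bigr)$ and total rational dimension $27$, in precise agreement with the prediction of Section~7 for the rationally elliptic space $R_5$; this confirms simultaneously that $r_{18}, r_{24}$ form a regular sequence and that no further relations survive.
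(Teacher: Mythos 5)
Your strategy is exactly the one the paper uses: it is the $R_5$ instance of the template carried out in detail for $R_7$ in Section 8.5 (compute $\ch(h^*(w))$ by the splitting principle, use the low-degree components of the ideal from Theorem \ref{thm:BorelandPittie} to eliminate $p_1$, $e$, $p_3$, $p_4$ in favour of $c_1$ and $p_2$, and read off $r_{18}$, $r_{24}$ from the components in degrees $18$ and $24$), together with the same consistency check against the Poincar\'e series of Section 7. The paper deliberately omits these details for $R_5$, deferring to Toda--Watanabe, so your write-up supplies exactly the intended argument.

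One concrete warning before you compute: the $\U(1)$-weights in the branching rule must be $(4,-2,1)$, i.e.
\[
h^*(w) = \xi^4 + \rho_{10}\o\xi^{-2} + \delta_{10}^-\o\xi,
\]
not $(4,2,1)$ as displayed in the paper and as you have taken it. Since $E_6$ is simple, $H^2(BE_6;\Q)=0$, so the dimension-weighted sum of the $\U(1)$-weights must vanish: $4\cdot 1 - 2\cdot 10 + 1\cdot 16 = 0$, whereas $4\cdot 1 + 2\cdot 10 + 1\cdot 16 = 40$. With the weights as you wrote them, $\ch_1(h^*(w)) = 40\,c_1$ would lie in the ideal $I$, forcing $a_2 = \phi(c_1(\xi)) = 0$ and collapsing the presentation; your claim that $\ch_1$ becomes an identity after the substitutions holds only with the corrected weight. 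Everything else in your elimination scheme is right, including the count: four components of $\ch(h^*(w))$ are spent eliminating generators and two survive as $r_{18}$, $r_{24}$, matching the six polynomial generators of $H^*(BE_6;\Q)$ in degrees $4,10,12,16,18,24$.
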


The integral cohomology ring of $R_5$ has been calculated by Toda and Watanabe, given in \cite[Corollary~C]{MR0358847}. The corresponding presentation of the rational cohomology algebra they obtain is 
\[ \frac{\Q[t,w]}{(s_{18},s_{24})}, \]
where $s_{18} = t^9 - 3w^2t$ and $s_{24} = w^3 - 9wt^8 +15w^2t^4$. Let $\psi:\Q[t,w] \mapsto \Q[a_2, a_8]$ be the (graded) algebra homomorphism determined by $\psi(t) = 4a_2$ and $\psi(w) = a_8/6 + 144a_2^4$. One can use \Magma{}, for example, to check that the ideal of $\Q[a_2,a_8]$ generated by $\psi(s_{18})$, $\psi(s_{24})$ is equal to the ideal generated by $r_{18}$, $r_{24}$. Therefore, the two algebras are isomorphic.

\subsection{The rational cohomology of $R_6$}

Adams shows in \cite[Chapter~8, p.50]{MR1428422} that there is a homomorphism
$$
h : \Spin(12) \times \Sp(1) \to E_7
$$
such that the kernel of $h$ is a central subgroup of order $2$. By definition 
$$
R_6 = E_7/(\Spin(12) \times_{C_2} \Sp(1)),
$$
and so we get a surjective ring homomorphism
$$
\phi  : H^*(B\Spin(12) \times B\Sp(1); \Q) \to H^*(R_6; \Q).
$$
Now let $u$ be the 56-dimensional fundamental representation of $E_7$. By [Corollary 8.2]\cite{MR1428422}, 
\[
h^*(u) = v_{12} \o \zeta + \Delta_{12}^+.
\]
Here $v_{12}$ is the $12$ dimensional vector representation of $\Spin(12)$, $\Delta^{\pm}_{12}$ are the spin representations, and $\zeta$ is the representation of $\Sp(1)$ on $\H = \C^2$.  

Again, we use Theorems \ref{thm:minmodgen} and \ref{thm:BorelandPittie}, to show that $\phi$ defines an isomorphism
$$
H^*(B\Spin(12)\times \Sp(1); \Q)/ I  \to H^*(R_6; \Q),
$$
where $I$ is the ideal in $H^*(B\Spin(12)\times \Sp(1); \Q)$ generated by the elements
\[
 \ch_q(h^*(u)), \text{with $q \geq 1$.}
\]
So we must compute
$$
\ch_q(h^*(u)) \in H^{2q}(B(\Spin(12) \times \Sp(1); \Q)
$$
in terms of the Pontryagin classes $p_i(v_{12}) \in H^{4i}(B\Spin(12); \Q)$, the Euler class $e(v_{12}) \in H^{12}(B\Spin(12); \Q)$, and the Chern class $c_2(\zeta) \in H^4(B\Sp(1); \Q)$.

\begin{theorem} \label{thm:presR6}
The cohomology ring $H^*(R_6; \Q)$ is isomorphic to \[ \frac{\Q[a_{4}, a_{8}, a_{12}]}{(r_{24},r_{28}, r_{36})}, \] 
where the generators are 
\[
a_4 = \phi(p_1(v_{12})) = \phi(2c_2(\zeta)), \quad a_8 = \phi(p_2(v_{12})), \quad a_{12} = \phi(p_3(v_{12})),
\]  and the relations are 
\begin{align*}
\ r_{24} & = 38367 a_4^6 - 131436 a_4^4 a_8 + 88272a _4^2 a_8^2 - 1600 a_8^3 -
273024 a_4^3 a_{12} + 55296 a_4 a_8 a_{12} - 10368 a_{12}^2, \\
r_{28} & = 63 a_4^7 + 96 a_4^5 a_8 + 48 a_4^3 a_8^2 + 640 a_4 a_8^3 - 1686 a_4^4 a_{12} - 4656 a_4^2 a_8 a_{12} + 160 a_8^2 a_{12} -
 1152 a_4 a_{12}^2, \\
r_{36} & = 19503 a_4^9 + 41184 a_4^7 a_8 - 150816 a_4^5 a_8^2 - 156672 a_4^3 a_8^3 + 19200 a_4 a_8^4 - 127224 a_4^6 a_{12} - {} \\ 
 & \phantom{==} 908448 a_4^4 a_8 a_{12} + 264576 a_4^2 a_8^2 a_{12} + 12800 a_8^3 a_{12} - 1806336 a_4^3 a_8^2 + 36864 a_4 a_8 a_{12}^2 + 18432 a_{12}^3.
\end{align*}
\end{theorem}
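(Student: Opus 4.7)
Following the general strategy of Section \ref{sec:ratcohR567}, I would apply Theorem \ref{thm:BorelandPittie} to the homomorphism $h : K = \Spin(12) \times \Sp(1) \to E_7$, whose kernel $C_2$ is finite. Since $Bp : BK \to B\bar K$ is a rational cohomology isomorphism (with $\bar K = K/C_2$), one obtains
\[
H^*(R_6; \Q) \iso H^*(BK; \Q)/I,
\]
where $I$ is generated by the components $\ch_q(h^*(w))$ of the Chern character of the pulled-back minimal representation, for $q \geq 1$. Using Adams's decomposition $h^*(w) = \rho_{12} \o \zeta + \delta_{12}^+$, the problem reduces to computing these Chern character components in the polynomial generators of $H^*(BK; \Q) = \Q[p_1, p_2, p_3, p_4, p_5, e, u]$, where $p_i = p_i(\rho_{12})$ in degree $4i$, $e = e(\rho_{12})$ in degree $12$, and $u = c_2(\zeta)$ in degree $4$.

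To carry out the computation, I would work on the maximal torus with formal Chern roots $\pm x_1, \dots, \pm x_6$ of $\rho_{12}$ and $\pm y$ of $\zeta$ (so $y^2 = -u$, $p_i$ is the $i$-th elementary symmetric polynomial in the $x_j^2$ for $i \leq 5$, and $e = x_1 \cdots x_6$). The Chern character assembles as
\[
\ch(h^*(w)) = \Big(2\sum_i \cosh x_i\Big)\,2\cosh y + \tfrac12\Big(\prod_i(e^{x_i/2}+e^{-x_i/2}) + \prod_i(e^{x_i/2}-e^{-x_i/2})\Big),
\]
and expanding the right-hand side as a power series gives each $\ch_q(h^*(w))$ as a symmetric polynomial in the $x_i$ and $y$, which is rewritten in the Weyl invariants $p_i$, $e$, $u$ via Newton's identities. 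Since $\rho_{12}$ is real and both $\zeta$ and $\delta_{12}^+$ are quaternionic, the bundle $h^*(w)$ is self-dual, so $\ch_q(h^*(w)) = 0$ for odd $q$; the nontrivial relations live in degrees $4, 8, 12, 16, 20, 24, 28, 32, 36, \ldots$.

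The next step is to reduce this infinite set of relations to the three claimed ones. The degree-$4$ relation $\ch_2(h^*(w)) = 6 p_1 - 12 u$ forces $u = p_1/2$, which gives the stated identity $a_4 = p_1 = 2c_2(\zeta)$. A direct calculation shows $\ch_4 = \tfrac{1}{144}\,\ch_2^{\,2}$, so no new relation arises in degree $8$. The relations from $\ch_6, \ch_8, \ch_{10}$ in degrees $12, 16, 20$ allow one to solve for $e, p_4, p_5$ respectively as polynomials in $p_1, p_2, p_3$. After these substitutions, the images of $\ch_{12}, \ch_{14}, \ch_{18}$ become polynomial relations in $a_4 = p_1$, $a_8 = p_2$, $a_{12} = p_3$ of degrees $24, 28, 36$; one verifies by direct algebra (using \Magma) that they agree, up to nonzero scaling, with the polynomials $r_{24}, r_{28}, r_{36}$ in the theorem. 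The redundancy of $\ch_{16}$ and of $\ch_q$ for $q \geq 20$ is then forced by Poincar\'e series comparison with the prediction of the rationally elliptic theory in Section~\ref{sec:rathomRosenfeld}, once one checks that $(r_{24}, r_{28}, r_{36})$ is a regular sequence in $\Q[a_4, a_8, a_{12}]$.

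The main obstacle is the size of the symbolic computation: $\ch(\delta_{12}^+)$ is a sum over $2^5 = 32$ weights, and pushing the expansion out to $\ch_{18}$ while simultaneously substituting the elimination relations $u = p_1/2$ and the expressions for $e, p_4, p_5$ in terms of $p_1, p_2, p_3$ requires careful bookkeeping in the ring of symmetric functions. In practice this is done with \Magma, following the pattern used for $R_5$ in the companion paper \cite{JRTII}; the structural conclusion that exactly three new relations survive in the predicted degrees is guaranteed a priori by the Poincar\'e polynomial computed in Section~\ref{sec:rathomRosenfeld}, so the computer algebra is only needed to pin down the exact coefficients of $r_{24}, r_{28}, r_{36}$.
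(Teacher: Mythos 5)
Your proposal is correct and follows essentially the same route as the paper: Theorem \ref{thm:BorelandPittie} applied to $h:\Spin(12)\times\Sp(1)\to E_7$ with $h^*(w)=\rho_{12}\o\zeta+\delta_{12}^+$, elimination of $u, e, p_4, p_5$ via the low-degree Chern character components, and identification of the surviving relations in degrees $24, 28, 36$, with the regular-sequence/Poincar\'e-series comparison closing the argument. The paper only carries out these details for $R_7$ and cites Nakagawa for $R_6$, but your computation (including the checks $\ch_2 = 6p_1-12u$ and $\ch_4 = \tfrac{1}{144}\ch_2^2$) is exactly the intended argument.
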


A presentation of $H^*(R_6; \Q)$ is given in \cite[Lemma~2.4(ii)]{MR1878720} by Nakagawa. Ours is different but, as in the case of $R_5$, it can be checked that the two presentations give isomorphic algebras over $\Q$. An explicit map is given by $\psi: \Q[a,b,c] \rightarrow \Q[a_4,a_8,a_{12}]$ determined by $\psi(a) = 120a_4$, $\psi(b) =  -11700a_4^2 + 1200a_8$, $\psi(c) = -993600a_4^3 + 14400a_4a_8 + 21600a_{12}$.

\subsection{The rational cohomology of $R_7$}

In \cite[Chapter 7]{MR1428422} Adams shows that there is a homomorphism 
$$
h : \Spin(16) \to E_8
$$
with kernel a cyclic group of order 2. The quotient of $\Spin(16)$ by this subgroup is the half-spin group $\HSpin(16)$ and by definition
\[
R_7 = E_8/\HSpin(16).
\]
So we get a surjective ring homomorphism
$$
\phi : H^*(B\Spin(16); \Q) \to H^*(R_7; \Q).
$$

Now let $u$ be the adjoint representation of $E_8$.  Then from \cite[Chapter 7]{MR1428422} it follows that
$$
h^*(u) = \lambda^2(v_{16}) \oplus \Delta^+.
$$
Then, as in the two previous two sections, it follows that we get an isomorphism
$$
\phi : H^*(B\Spin(16); \Q)/I \to H^*(R_7; \Q) 
$$
where $I$ is the ideal generated by
$$
\ch_q(h^*(u)) \in H^{2q}(B(\Spin(16); \Q), \text{with $q \geq 1$.}. 
$$
Finally, we have to compute $\ch_q(h^*(u))$ in terms of the Pontryagin classes $p_i(v_{16})$ and the Euler class $e(v_{16})$.

\begin{theorem} \label{thm:presR7}
The cohomology ring $H^*(R_7; \Q)$ is isomorphic to \[ \frac{\Q[a_{8}, a_{12}, a_{16}, a_{20}]}{(r_{36},r_{40}, r_{48},r_{60})}, \] 
where the generators are \[a_8 = \phi(p_2(v_{16})), \quad a_{12} = \phi(p_3(v_{16})), \quad a_{16} = \phi(p_4(v_{16})) \quad a_{20} = \phi(p_5(v_{16})),\] and the relations are 
\begin{align*}
\ r_{36} & = 275 a_8^3 a_{12} - 6150 a_8^2a_{20} + 5400 a_8 a_{12} a_{16} - 756 a_{12}^3 - 10800 a_{16}a_{20}, \\
r_{40} & = 275 a_{8}^5 + 4080 a_{8}^3 a_{16} + 945 a_{8}^2 a_{12}^2 - 26460 a_8 a_{12} a_{20} - 25920 a_8 a_{16}^2 + 27216 a_{12}^2 a_{16} + 26460 a_{20}^2, \\
r_{48} & = -225875 a_8^6 - 8037000 a_{8}^4 a_{16} + 4233600 a_8^3 a_{12}^2 - 23020200 a_{8}^2 a_{12} a_{20} - 29160000 a_{8}^2 a_{16}^2 + {} \\ 
 & \phantom{==} 28576800 a_{8} a_{12}^2a_{16} - 166698000 a_{8} a_{20}^2 + 3000564 a_{12}^4 + 57153600 a_{12} a_{16} a_{20} + 46656000 a_{16}^3, \\
r_{60}  & = -2868125 a_8^6a_{12} + 22312500 a_{8}^5 a_{20} - 36945000 a_{8}^4 a_{12} a_{16} - 3307500 a_{8}^3 a_{12}^3 - {} \\
& \phantom{==} 390600000 a_8^3 a_{16} a_{20} + 222264000 a_8^2 a_{12}^2 a_{20} - 243000000 a_{8}^2 a_{12} a_{16}^2 + 71442000 a_{8} a_{12}^3 a_{16} - {} \\
& \phantom{==} 972405000 a_{8} a_{12} a_{20}^2 + 1360800000 a_{8}a_{16}^2 a_{20} - 18003384 a_{12}^5 + 1000188000 a_{12}^2 a_{16} a_{20} - {} \\
& \phantom{==} 699840000 a_{12} a_{16}^3 - 463050000 a_{20}^3.
\end{align*}
\end{theorem}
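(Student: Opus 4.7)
I would apply Theorem \ref{thm:BorelandPittie} with $G = E_8$, $\bar K = \HSpin(16)$, and $u = w$ the adjoint (minimal) representation of $E_8$, together with the general strategy of Section 8.1. Since $p : \Spin(16) \to \HSpin(16)$ is a central quotient with finite kernel, $(Bp)^*$ identifies rational cohomologies, giving
\[
H^*(R_7; \Q) \;\cong\; \Q[p_1, \ldots, p_7, e] \,/\, I,
\]
where $p_k = p_k(\rho_{16})$, $e = e(\rho_{16})$, and $I$ is generated by $\ch_q(h^*w)$ for $q \geq 1$. Since the type of $E_8$ is $(3,15,23,27,35,39,47,59)$, Theorem \ref{thm:chernofminisenough} tells us that $I$ is already generated by the eight components $\ch_q(h^*w)$ with $q \in \{2,8,12,14,18,20,24,30\}$, whose cohomological degrees are $4, 16, 24, 28, 36, 40, 48, 60$; the remaining $\ch_q$ are algebraic consequences of these.

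Using the given decomposition $h^*w = \lambda^2\rho_{16} + \delta_{16}^+$ and formal Chern roots $\pm x_1, \ldots, \pm x_8$ of $\rho_{16}$, the Chern characters expand as
\[
\ch(\lambda^2\rho_{16}) = 8 + 2\sum_{i<j}\bigl(\cosh(x_i+x_j) + \cosh(x_i-x_j)\bigr),
\]
\[
\ch(\delta_{16}^+) = 2^7\prod_{i=1}^{8}\cosh(x_i/2) \,+\, 2^7\prod_{i=1}^{8}\sinh(x_i/2).
\]
Extracting the degree $2q$ homogeneous components gives explicit polynomials in $p_1, \ldots, p_7, e$, using that $p_k$ is the $k$th elementary symmetric function in $x_1^2, \ldots, x_8^2$ and $e = x_1\cdots x_8$. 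Odd-degree components $\ch_{2k+1}$ vanish since $h^*w$ is real; equivalently, $\prod\cosh(x_i/2)$ contributes only to the Pontryagin sector, while $\prod\sinh(x_i/2)$ contributes only to the Euler-class sector.

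Theorems \ref{thm:rathomgrpsRosenfeld} and \ref{thm:ellipticprops} force $H^*(R_7;\Q)$ to be a complete intersection with generators in degrees $8, 12, 16, 20$ and four relations in degrees $36, 40, 48, 60$. Accordingly, the four low-degree generators of $I$ must eliminate the four \emph{extra} polynomial generators $p_1, e, p_6, p_7$ of $H^*(B\Spin(16); \Q)$: $\ch_2$ is a nonzero scalar multiple of $p_1$ (a direct expansion gives the coefficient); $\ch_8$ has nonzero coefficient of $e$, supplied exclusively by the leading term $e/2^8$ of $\prod\sinh(x_i/2)$, since $\ch(\lambda^2\rho_{16})$ is a polynomial in the $p_k$ alone; $\ch_{12}$ and $\ch_{14}$ likewise eliminate $p_6$ and $p_7$ (the leading coefficients again being readily computed). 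The four surviving generators $\ch_{18}, \ch_{20}, \ch_{24}, \ch_{30}$, reduced modulo these eliminations, become polynomial relations in $a_8 = \phi(p_2), a_{12} = \phi(p_3), a_{16} = \phi(p_4), a_{20} = \phi(p_5)$, and I would verify that these coincide with the stated $r_{36}, r_{40}, r_{48}, r_{60}$. A cross-check against Section 7.4 (Poincar\'e polynomial and Euler characteristic $135$, with the relations forming a regular sequence by Theorem \ref{thm:ellipticprops}(iii)) confirms completeness.

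The main obstacle is computational: the symmetric-function expansions of $\prod\cosh(x_i/2)$ and $\prod\sinh(x_i/2)$ in eight variables, together with the Gr\"obner-type elimination in $\Q[p_1,\ldots,p_7,e]$, produce large rational coefficients, and the residual polynomials must match the stated $r_{36}, r_{40}, r_{48}, r_{60}$ exactly. I would perform this in \Magma, as elsewhere in the paper. A subtler conceptual point is ensuring \emph{a priori} that the four low-degree Chern character components really do eliminate $p_1, e, p_6, p_7$ (and not, for instance, $p_2$ or $p_3$); this must be checked by inspecting leading coefficients before invoking the elliptic count, since the elliptic data only constrains the shape of the answer once one knows which variables survive the reduction.
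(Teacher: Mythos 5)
Your proposal follows essentially the same route as the paper: apply Theorem \ref{thm:BorelandPittie} with the decomposition $h^*w = \lambda^2\rho_{16} + \delta_{16}^+$, use the components $\ch_q(h^*w)$ in degrees $4, 16, 24, 28$ to eliminate $p_1, e, p_6, p_7$ from $\Q[p_1,\dots,p_7,e]$ (checking the relevant leading coefficients, e.g.\ the $\tfrac{1}{2}e$ term in $\ch_8$), and then read off $r_{36}, r_{40}, r_{48}, r_{60}$ from the remaining components in degrees $36, 40, 48, 60$, with the elliptic-space data from Section 7 confirming the shape of the answer. The only cosmetic difference is that you kill $p_1$ via the degree-$4$ component $\ch_2$ while the paper invokes $H^4(R_7;\Q)=0$; these are equivalent, and both arguments delegate the same symmetric-function and elimination computations to \Magma.
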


This seems to be the first explicit presentation of $H^*(R_7; \Q)$ in the literature.

\subsection{The proof for $R_7$}
Here we explain how we implement the method in Section \ref{subsec:genstratcomps} in the case of $R_7$. The ring $H^*(B\Spin(16); \Q)$ is given by 
$$
H^*(B\Spin(16); \Q) = \Q[p_1, p_2, p_3, p_4, e, p_5, p_6, p_7]
$$
where the $p_i=p_i(v_{16}) \in H^{4i}(B\Spin(16); \Q)$ are the Pontryagin classes of $v_{16}$, the $16$-dimensional real vector bundle over $B\Spin(16)$ and $e=e(v_{16}) \in H^{16}(B\Spin(16); \Q)$ is the Euler class of the same bundle. Following on from the previous section we calculate
\[
\ch(h^*(u)) = \ch(\lambda^2 v_{16}) + \ch(\Delta_{16}^+),
\]
up to and including the term $\ch_{30}$. We provide some details about this computation in Section \ref{subsec:calcs}. 

We know that $H^*(R_7; \Q)$ is generated by $4$ elements of degrees $8, 12, 16, 20$.  Since the map $\phi : H^*(B\Spin(16); \Q) \to H^*(R_7; \Q)$ is surjective, we can take
$$
a_8 = \phi(p_2), \quad a_{12} =\phi(p_3), \quad a_{20} = \phi(p_5)
$$
as the generators in degree 8, 12, and 20. Note that $\phi(p_1) = 0$ since $H^4(R_7; \Q)$ is zero. The ring $H^*(B\Spin(16); \Q)$ has two generators in degree $16$ so we have to make a choice of an element in $H^{16}(B\Spin(16); \Q)$ which maps to a generator in $H^{16}(R_7; \Q)$. Our calculation of $\ch(h^*(u))$ yields   
$$
\ch_8(h^*(u)) = \frac{1}{336}p_2^2 + \frac{1}{28}p_4 + \frac{1}{2} e \mod p_1.
$$
Here mod $p_1$ means up to an element of the form $xp_1$. This shows that we may choose $a_{16} = \phi(p_4)$ as our $16$-dimensional generator. Having made this choice, we immediately obtain an expression for $\phi(e)$ using $\phi(\ch_8(h^*(u))) = 0$:  
$$
\phi(e) =  -\frac{1}{168}a_8^2 - \frac{1}{14}a_{16}.
$$
The expression for $\ch_{12}(h^*(u))$ is a polynomial in $p_1, p_2, p_3, p_4, e, p_5, p_6$ (it does not involve $p_7$ since $p_7$ has degree $28$). At this stage we have formulae for $\phi(p_i)$, $i=1, \ldots, 5$ and $\phi(e)$ in terms of $a_4, \ldots, a_{20}$. This leads to the formula    
$$
\phi(p_6) = \frac{13}{1512} a_8^3 + \frac{3}{14} a_8a_{16} - \frac{1}{20} a_{12}^2. 
$$
Repeating this with $\ch_{14}(h^*(u))$ gives
$$ 
\phi(p_7) = 
\frac{1}{168} a_8^2 a_{12} - \frac{1}{12} a_8 a_{20} + \frac{1}{14} a_{12} a_{16}.
$$

We now have formulas for $\phi$ applied to each of the $8$ generators of $H^*(B\Spin(16); \Q)$ as polynomials in $a_4, a_{12}, a_{16}, a_{20}$. This allows us to factorise the homomorphism $\phi$ as 
$$
\begin{CD}
H^*(\Spin(16); \Q) @>{\theta}>> \Q[a_4, a_{12}, a_{16}, a_{20}] @>{q}>> H^*(R_7; \Q)
\end{CD}
$$
where $q$ is the quotient homomorphism, and $\theta$ is defined by the above formulas expressing $\phi(p_i)$ for $i = 1, \dots, 7$ and $\phi(e)$ in terms of $a_4, \ldots, a_{20}$. It follows that $\ker(q)$ is generated by the $4$ elements
$$
\theta(\ch_{18}(h^*(u))), \quad \theta(\ch_{20}(h^*(u))), \quad \theta(\ch_{24}(h^*(u))), \quad \theta(\ch_{30}(h^*(u))).
$$
This gives us four polynomials
\[ r_{36}, r_{40}, r_{48}, r_{60} \in \Q[a_8, a_{12}, a_{16}, a_{20}], \]
such that
\[ H^*(R_7) = \Q[a_8, a_{12}, a_{16}, a_{20}]/(r_{36}, r_{40}, r_{48}, r_{60}).\]
Calculating the expressions for these four polynomials gives the relations in Theorem \ref{thm:presR7}.

\subsection{Comments on the computation} \label{subsec:calcs}

The paper \cite{MR3903053}, in particular Section~3, is very helpful for doing these calculations. We illustrate our methods by outlining one method of computing
\[ \ch(h^*(u)) = \ch(\lambda^2 v_{16}) + \ch(\Delta_{16}^+) \]
in terms of the Pontryagin classes and Euler class of $v_{16}$ in $H^*(B\Spin(16); \Q)$.  This is the computation which leads to  the presentation of $H^*(R_7; \Q)$.

We begin with the computation of $\ch(\Delta_{16}^+)$.  The standard method is to use the splitting principle, as described in Jay Wood's paper \cite{MR1443417}, and then use the correspondence between symmetric functions and the Pontryagin classes and the Euler class to express $\ch(\Delta_{16}^+)$ in terms of the generators chosen above. The explicit expansion up to degree 16 is
\begin{align*}
\ch(\Delta_{16}^+) = {} & 128 + 16p_1 + \frac{1}{3} p_1^2 + \frac{4}{3} p_2 + \frac{1}{360} p_1^3 + \frac{1}{30} p_1 p_2 + \frac{2}{15} p_3 + \frac{1}{80640} p_1^4 + \frac{1}{3360} p_1^2 p_2 + {} \\ 
& \frac{1}{315} p_1 p_3 + \frac{1}{5040} p_2^2 + \frac{17}{1260} p_4 + \frac{1}{2} e + \cdots
\end{align*}

Again, the standard method for calculating $\ch(\lambda^2 v_{16})$ is to use the splitting principle to express $\ch_{k}(\lambda^2 v_{16})$ as a polynomial in the Pontryagin classes and the Euler class. The explicit expansion up to degree 16 is
\begin{align*}
\ch(\lambda^2 v_{16}) = {} & 120 + 14 p_1 + \frac{7}{6} p_1^2 - \frac{4}{3} p_2 + \frac{7}{180} p_1^3 - \frac{1}{30} p_1 p_2 - \frac{2}{15} p_3 + \frac{1}{1440} p_1^4 - \frac{1}{72} p_1 p_3 + {} \\ 
& \frac{1}{360} p_2^2 + \frac{1}{45} p_4 + \cdots
\end{align*}

\section{Computations in representation theory} \label{sec:reptheorycomps}

The proofs of Theorems \ref{thm:minmodgen}, \ref{thm:minmodgenSpineven} and Lemma \ref{lem:kernelsofhmaps} both require computations in representation theory which follow a standard operating procedure.

\begin{enumerate}
\item \label{item:1}
First we need detailed information about the $\Lambda$-ring $R(G)$ where $G$ is an appropriate Lie group.  Sometimes this is in the literature, but usually in the case of $E_6, E_7, E_8$ we use \Magma{} to get this information. The main issue is that if we start with a representation $u$ of $G$ and ask \Magma{} to compute $\lambda^k u$ it will give the answer as a decomposition into a sum of irreducible representations. However, we want to express it as a polynomial in a chosen set of generators. All computational programmes currently use the fundamental representations as the choice of generators, so that is our default choice. The labeling of weights for $G$ are different in various sources; we follow the Bourbaki convention \cite[Ch.\ VI, Planches I-IX]{MR0240238} for ordering the fundamental dominant weights, which is used in \Magma{}.  We now have to use \Magma{} to convert the decomposition of $\lambda^k u$ into a polynomial in the fundamental representations.  
\item
Next we have to soften this information to allow us to extract what we need.  The first step in this process is to pass from the $\Lambda$-ring $R(G)$  to the quotient $\Lambda$-ring 
$$
S(G) = R(G)/I^2(G).
$$
Our standard notation is $[x] \in S(G)$ for the image of the element $x$ in $R(G)$ under the quotient homomorphism $R(G) \to S(G)$.  If $x \in R(G)$ we write $\eps_x \in \Z$ for the augmentation of $x$.  If $x, y \in R(G)$ then 
$$
(x - \eps_x)(y - \eps_y) \in I^2(G)
$$
and it follows that in $S(G)$
$$
[xy] = \eps_y[x] + \eps_x[y] - \eps_x\eps_y.
$$ 
This will give the $\lambda$-operations in $S(G)$.  The only practical way to handle the arithmetic in this step of the process is to use \Magma.
\item
Both of the results we prove using this procedure involve the $\Psi$-module
$$
V(G) = S(G)/\Z.
$$
Note that $S(G)$ is a $\Lambda$-ring but $V(G)$ is not. However, both are $\Psi$-modules and the quotient map $S(G) \to V(G)$ is a map of $\Psi$-modules.
\end{enumerate}

\subsection{The proof of Theorem \ref{thm:minmodgen}}

Recall that in Theorem \ref{thm:minmodgen} we are assuming that $G$ is a simply connected Lie group not isomorphic to $\Spin(2n)$. The discussion in Section \ref{sec:Psimodule}, in particular the inductive formula for the Adams operations, show that the images of $[u], \psi^2[u], \ldots, \psi^n[u]$ generate $V(G) \otimes \Q$ if and only if the images of $[u], \lambda^2[u], \ldots, \lambda^n[u]$ generate $V(G) \otimes \Q$.  

For $\SU(n), \Sp(n)$ and $G_2$ the representation $u$ (of dimension $n$, $2n$ and $7$, respectively) actually generates $R(G)$ as a $\Lambda$-ring (see \cite[Theorems~7.4 and 7.6]{MR252560} and \cite[Section~22.3]{MR1153249}, respectively). The result is immediate. For the remaining cases we prove by direct computation that $u$ yields a generator of $V(G)\o\Q$.

For $G = \Spin(2n+1)$ the representation ring $R(G)$ is generated as a $\Lambda$-ring by the vector representation $v = v_{2n+1}$ (dimension $2n+1$) and the spin representation $\Delta = \Delta_{2n+1}$ (dimension $2^n$). Furthermore, the following equation holds in $R(G)$ (\cite[Theorem~10.3]{MR1249482}). 
\[ \Delta^2 = \lambda^n v + \lambda^{n-1} v + \cdots + v + 1. \]
In $S(G) \otimes \Q$ we have $[\Delta^2] = 2^{n+1}[\Delta] - 2^{2n}$. Therefore, the image of $[\Delta]$ in $V(G) \otimes \Q$ is a rational multiple of the image of $[\lambda^n v] + [\lambda^{n-1} v] + \cdots + [v]$. 

We illustrate how we use \Magma{} in the case $G = F_4$. The fundamental representations of $F_4$ are
\[ \rho_1, \rho_2, \rho_3, \rho_4, \quad \text{ of dimension} \quad 52,1274, 273, 26, \text{ respectively,}\]
and $u = \rho_4$. 
In $R(F_4)$ the $\lambda$-operations are given by
\[ \lambda^2 u = \rho_1 + \rho_3,\] 
\[ \lambda^3 u = \rho_1u + \rho_2 - u, \]
\[ \lambda^4 u = \rho_1^2 + \rho_1\rho_3 - \rho_2 - u^2.\]
This is the input for step \ref{item:1} of the standard operating procedure.  

Next we reduce these formulas to the $\Lambda$-ring $S(F_4)$. This gives the following formulas.
\[ \lambda^1[u] = [u],\]
\[\lambda^2[u] = [\rho_1] + [\rho_3],\]
\[\lambda^3[u]  = 26[\rho_1] +[\rho_2] +51[u] - 1352,\]
\[\lambda^4[u] = 377[\rho_1] - [\rho_2] + 52[\rho_3] - 52[u] - 16224.\]
Now we reduce to $V(F_4) = S(F_4)/\Z$.  This leads to the $4\times 4$ matrix.
\[
\begin{pmatrix}
0 & 1& 26 & 377  \\
0 & 0 & 1 & -1  \\
0 & 1 & 0 & 52\\
1 & 0 & 51 & -52 \\
\end{pmatrix}
\]
The columns of this matrix are the coordinates, in the basis defined by the fundamental representations $\rho_1, \rho_2, \rho_3, \rho_4 = u$, of the elements in $V(F_4)$ defined by $[\rho_4], \lambda^2[\rho_4], \lambda^3[\rho_4], \lambda^4[\rho_4]$. The determinant of this matrix is $351$. Therefore, $V(F_4)\o\Q$ is generated as a $\Psi$-module by the element defined by $u$.  

Implementing this procedure for $E_6$, $E_7$, and $E_8$ leads to the following three matrices, all of which are non-singular, as required.
\begin{enumerate}
\item
For $E_6$
$$
\begin{pmatrix}
1&    0&     0&    1& -702&-3483\\
0&    0&    0&  351& 3834&17496\\
0&    1&    0&  -27&  -77& -324\\
0&    0&    1&    0&  -54& -236\\
0&    0&    0&   79&  756& 2511\\
0&    0&    0& -405&-2759& 2754\\
\end{pmatrix}
$$
$$
\det =  -27208467.
$$
\item
For $E_7$
$$
\begin{pmatrix} 
0&   0&   0&   0&  -42504& -834648&-4655288\\
0&   0&   0&   0&8645&  207424& 2129083\\.
0&   0&   0&   0& 968&   20902&  166704\\
0&   0&   0&   1&   0&-267&   -2848\\
0&   0&   1&   0&-132&-856&   -5831\\
0&   1&   0&   1&  56&  -14742& -179760\\
1&   0&   1&   0&   -7371&  -79648& -560651\\
\end{pmatrix} 
$$
$$
\det =  -1997102661696.
$$
\item
For $E_8$
$$ 
\begin{pmatrix}   
0& 0& 0& 0& 0&-401581533&  -48147450080&-2346940420190\\
0& 0& 0& 0& 0&   6661497& 860320742&   49370806120\\
0& 0& 0& 0& 0&185628&  22371987&1105454390\\
0& 0& 0& 0& 1&-1& -7999&   -514878\\
0& 0& 0& 1& 0& -3627&   -112746&  -2002508\\  
0& 0& 1& 0&   248& 34255&  -9767140&-735062326\\  
0& 1& 0&   247&  -496&  -5059262&-205995340&   -5451187498\\  
1& 1&   495& 30380&   2573495& 304455619&   12658360729&  209067977980\\
\end{pmatrix} 
$$
$$
\det = 22804152835143344418390.
$$
\end{enumerate}

\subsection{The proof of Theorem \ref{thm:minmodgenSpineven}}

Again, everything we need is contained in \cite[Theorem~10.3]{MR1249482}. Let $G = \Spin(2n)$. The representation ring $R(G)$ is generated as a $\Lambda$-ring by the vector representation $v = v_{2n}$ (dimension $2n$) and the two spin representations $\Delta^+ = \Delta^+_{2n}$ and $\Delta^- = \Delta^-_{2n}$ (both have dimension $2^{n-1}$). This time, the following equation in $R(G)$ is sufficient for our purpose. 
\[ \Delta^+ \Delta^- = \lambda^{n-1} v + \lambda^{n-3} v + \cdots . \]
It follows that in $S(G) \otimes \Q$ we have $[\Delta^+ \Delta^-] = 2^{n-1}[\Delta^+] + 2^{n-1}[\Delta^-] - 2^{2n-2}$. Therefore, $V(G) \otimes \Q$ is generated by the images of $[v]$ and $[\Delta^+]$, as required.  

\subsection{The proof of Lemma \ref{lem:kernelsofhmaps}}

This time we start with the case of $E_6$. The fundamental representations of $E_6$ are
\[ \rho_1, \quad \rho_2 = a, \quad \rho_3 = \lambda^2 \rho_1, \quad \rho_4 = \lambda^3 \rho_1 = \lambda^3 \rho_6,\quad \rho_5 = \lambda^2\rho_6, \quad \rho_6\]
where $a$ is the $78$-dimensional adjoint representation and $\rho_1$, $\rho_6$ are the two inequivalent $27$-dimensional representations.

The fundamental representations of $\Spin(10)$ are
\[ v = v_{10}, \quad \lambda^2 v, \quad \lambda^3 v, \quad \Delta^+, \quad \Delta^-,\]
where $v = v_{10}$ is the $10$-dimensional vector representation and $\Delta^{\pm}$ are the two $16$-dimensional spin representations. 

Restricting the six fundamental $E_6$-representations to $\Spin(10)$ yields the following formulas, as in \cite[Lemma~2]{MR0440584}. 
\begin{enumerate}
\item
$h_6^*(\rho_1) =  v +  \Delta^- +1$,
\item 
$h_6^*(\rho_2) = \lambda^2 v + \Delta^+ + \Delta^- + 1$,
\item
$h_6^*(\rho_3) = v + \lambda^2 v + \lambda^3 v +  \Delta^- + v \Delta^-$,
\item
$h_6^*(\rho_4) = \lambda^2 v + 2\lambda^3 v + (\lambda^2 v) \Delta^+ + (\lambda^2 v) \Delta^- + v (\lambda^3 v)$,
\item
$h_6^*(\rho_5) = v + \lambda^2 v + \lambda^3 v +  \Delta^+ + v \Delta^+$,
\item
$h_6^*(\rho_6) =  v +  \Delta^+ +1$.
\end{enumerate}

Next we reduce these formulas to the $\Lambda$-ring $S(G)$.  This produces the following six equations in $S(\Spin(10))$.
\begin{enumerate}
\item
$h_6^*[\rho_1] = [v] +  [\Delta^-]$ + 1,
\item 
$h_6^*[\rho_2] = [\lambda^2 v] + [\Delta^+] + [\Delta^-] $ + 1,
\item
$h_6^*[\rho_3] = 17 [v] + [\lambda^2 v] + [\lambda^3 v] +  11[\Delta^-] - 160$,
\item
$h_6^*[\rho_4] = 120[v] + 33[\lambda^2 v] + 12[\lambda^3 v] + 45[\Delta^-]  + 45[\Delta^+] - 2640$,
\item
$h_6^*[\rho_5] = 17[v] + [\lambda^2 v] + [\lambda^3 v] +  11[\Delta^+] -160$,
\item
$h_6^*[\rho_6] = [v] +  [\Delta^+] +1 $.
\end{enumerate}

This gives the following matrix with 5 rows and 6 columns which is the matrix of $(h_6)_* : V(E_6) \to V(\Spin(10))$ in the chosen bases for $V(E_6)$ and $V(\Spin(10))$.
\[
\begin{pmatrix}
1 &  0 & 17 & 120 & 17 & 1 \\
0 &  1 &  1 & 33  & 1  & 0 \\
0 &  0 &  1 & 12  & 1  & 0 \\
0 &  1 &  0 & 45  & 11 & 1 \\
1 &  1 & 11 & 45  & 0  & 0 
\end{pmatrix}
\]
The rank of this matrix is $4$ and the nullity is $2$, thus the transpose has rank $4$ and nullity $1$. Therefore, the kernel of $h_6^*: V^*(\Spin(10))\o\Q \to V^*(E_6)\o\Q$ is one dimensional, proving part (i) of Lemma \ref{lem:kernelsofhmaps}. 

The same procedure gives the corresponding matrices for the  homomorphisms $h_7: \Spin(12) \to E_7$ and $h_8: \Spin(16) \to E_8$.  However, in this case we have to use \Magma{} to calculate $\Lambda$-operations and also to soften this information to give the linear maps $h_7^*: V(E_7)\o\Q \to V(\Spin(12))\o\Q$ and $h_8^*: V(E_8)\o\Q \to V(\Spin(16))\o\Q$. 

Using the standard bases defined by fundamental representations, we find that the matrices of $h_7^*$ and $h_8^*$ are, respectively,  
\[ \begin{pmatrix}   
    0&   34&  220&25840& 1890&   88&    2\\
    1&    0&   66&  890&   56&    2&    0\\
    0&    2&   12& 1320&   34&    0&    0\\
    0&    0&    1&  145&   24&    1&    0\\
    2&   12&  200& 9120&  220&    0&    0\\
    0&    2&    0&  120&  210&   24&    1\\
\end{pmatrix}\]
and
\[
\begin{pmatrix}
       160&      9056&    597840&1005303376&  13865488&    125552&       560&         0\\
        -1&      -259&    -15473& -49227299&   -344942&      4241&       128&         1\\
         0&       144&      4480&  17099280&    296400&      3168&        16&         0\\
         1&        -3&      1808&  -1025376&    -23023&      -240&        -1&         0\\
         0&        16&       560&   2093696&     22048&       112&         0&         0\\
         0&        -2&      -105&   -370711&      3367&       121&         1&         0\\
        16&       816&     58608&  94909584&    920192&      4368&         0&         0\\
        -1&        15&     -5568& -12391764&    -22477&      4944&       119&         1\\

\end{pmatrix}.\]

In both cases the rank is $4$.  Taking account of the fact that the first is a $6 \times 7$ matrix and the second is an $8 \times 8$ matrix, this shows that the nullity of the transpose of the first is $2$ and for the second it is $4$. This completes the proof.   

\bibliographystyle{alpha}
\bibliography{JRT}

\end{document}